\renewcommand{\H}{{\Bbb H}}
\newcommand{\Z}{{\Bbb Z}}
\renewcommand{\leq}{\leqslant}
\renewcommand{\geq}{\geqslant}
\newcommand{\ql}{\operatorname{ql}}
\newcommand{\resp}{{\it resp.~}}
\renewcommand{\phi}{\varphi}
\newcommand{\prf}{\noindent{\bf Proof. }}
\newcommand{\nd}{\operatorname{ ndeg}}
\renewcommand{\prec}{\preccurlyeq}
\newtheorem{theorem}{Theorem}[section]
\newtheorem{lemma}{Lemma}[section]
\newtheorem{prop}{Proposition}[section]
\theoremstyle{definition}
\newtheorem{exa}{Example}[section]
\numberwithin{equation}{section}
\begin{document}

\title[]{Isotropy of 5-dimensional quadratic forms over the function field of a quadric in characteristic 2}

\thanks{
 The first author is supported by the {Deutsche Forschungsgemeinschaft} project \emph{The Pfister Factor Conjecture in characteristic two} (BE 2614/4) and the FWO Odysseus programme (project \emph{Explicit Methods in Quadratic Form Theory}).
 The second author acknowledges the support of the French Agence Nationale de la Recherche (ANR) under reference ANR-12-BL01-0005.}
\author{Andrew Dolphin}

\address{Departement Wiskunde--Informatica, Universiteit Antwerpen, Belgium}

\email{Andrew.Dolphin@uantwerpen.be}

\author{Ahmed Laghribi}

\address{Facult\'e des Sciences Jean Perrin, Laboratoire de math\'ematiques de Lens EA 2462, rue Jean Souvraz - SP18, 62307 Lens, France}

\email{ahmed.laghribi@univ-artois.fr}

\begin{abstract} We complete a classification of quadratic forms  over a field of characteristic $2$ of  type $(1,3)$ that become isotropic over the function field of a quadric.

\medskip\noindent
\emph{Keywords:}  Quadratic forms, function fields of quadrics, isotropy, characteristic two.

\medskip\noindent
\emph{Mathematics Subject Classification (MSC 2010):} 11E04; 11E81.

\end{abstract}

\maketitle

\section{Introduction}

Throughout this paper $F$ denotes a field of characteristic $2$. It is well-known, see \cite[(7.32)]{Elman:2008}, that any $F$-quadratic form $\phi$ is isometric to $$[a_1,b_1]\perp \cdots \perp [a_r,b_r]\perp \left< c_1, \cdots,c_s\right>$$ for  scalars $a_1,b_1, \cdots, a_r,b_r, c_1,\cdots, c_s\in F$, where $[a,b]$ (\resp $\left< c_1, \cdots, c_s\right>$) denotes the binary quadratic form given by $(x,y)\mapsto ax^2+xy+by^2$ (\resp the diagonal quadratic form given by $(x_1,\ldots, x_s)\mapsto\sum_{i=1}^sc_ix_i^2$). In this case the pair $(r,s)$ is unique, and we call it the type of $\phi$. The form $\left< c_1,\cdots, c_s\right>$ is also unique, we call it the quasilinear part of $\phi$ and we denote it by $\ql(\phi)$. We say that $\phi$ is nonsingular (\resp singular) if $s=0$ (\resp $s>0$) and totally singular if $r=0$. A notion that we will use in the formulation of our results is the domination relation between quadratic forms. Recall a quadratic form $\phi=(V,q)$ is called dominated by another quadratic form $\psi=(W, p)$, denoted  $\phi \prec \psi$, if there exists an injective $F$-linear map $f:V\longrightarrow W$ such that $q(v)=p(f(v))$ for every $v\in V$. The form $\phi$ is called weakly dominated by $\psi$, denoted  $\phi \prec_w \psi$, if $\alpha \phi \prec \psi$ for some $\alpha \in F^*$. 

Let $\phi$ be an anisotropic $F$-quadratic form. An important problem in the algebraic theory of quadratic forms is  classifying anisotropic $F$-quadratic forms $\psi$ for which $\phi$ becomes isotropic over $F(\psi)$, the function field of the affine quadric given by  $\psi$. This problem has been completely studied by the second author in \cite{l1} when $\phi$ is of dimension $\leq 4$,  of dimension $5$ and type $(2,1)$ or an Albert form (i.e., a 
nonsingular $6$-dimensional quadratic form of trivial Arf invariant). The  isotropy problem was treated by Faivre for certain  forms of dimension $6,7$ and $8$ in \cite{fairve:thesis} and  recently the second author  and Rehmann studied the isotropy of $5$-dimensional quadratic forms of type $(0,5)$ over function fields of quadrics in \cite{lr}. 
Our aim in this paper is to give a complete answer to the isotropy of   $5$-dimensional $F$-quadratic forms over the function field of a quadric
 for the remaining case, that is forms 
 of type $(1,3)$. 

An important  case where the isotropy question is well-known  concerns Pfister neighbours. More precisely, if $\phi$ is an anisotropic Pfister neighbour of a quadratic Pfister form $\pi$, then $\phi$ is isotropic over $F(\psi)$ if and only if $\pi$ is isotropic over $\psi$, which is equivalent, by Theorem \ref{trd}, to $\psi \prec_w \pi$.  In particular, in this case if the dimension of $\psi$ is greater than half the dimension of $\pi$, then $\psi$ is also a Pfister neighbour of $\pi$. 
Here, therefore, we will be most interested in the case where $\phi$ is not a Pfister neighbour.

In the following proposition we summarise several  cases of forms $\psi$ for which all forms  of type $(1,3)$ that are not Pfister neighbours remain anisotropic over $F(\psi)$. These  can be deduced from previous results proven by the second author, Hoffmann and Totaro.
Here $\triangle(\psi)$ denotes the Arf invariant of $\psi$ (see \S2).

\begin{prop}\label{ppar}
Let $\phi$ be an anisotropic $F$-quadratic form  of dimension $5$ and type $(1,3)$ that is not a Pfister neighbour, and let $\psi$ be anisotropic $F$-quadratic form  of  type $(r,s)$. Then $\phi$ is anisotropic over $F(\psi)$ in the following cases:
\begin{enumerate}[$(1)$]
\item $r=0$ and  $s=1$.
\item $r=0$ and $s\geq 5$.
\item $r=1$ and $s\geq 4$.
\item  $r=2$, $s=0$ and $\triangle(\psi)\neq 0$.
\item $r=2$ and $s\geq 1$.
\item $r\geq 3$.
\end{enumerate}
\end{prop}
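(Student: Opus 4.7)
My plan is to establish each of the six cases by invoking an existing result on isotropy over function fields of quadrics in characteristic two, due to the second author, Hoffmann, or Totaro. The uniform strategy is: assume for contradiction that $\phi_{F(\psi)}$ is isotropic, and then read off from the cited theorem that $\phi$ must be a Pfister neighbour, contradicting the hypothesis.

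Case $(1)$ is immediate: a quasilinear $F$-quadratic form of dimension one defines a single $F$-rational point, so $F(\psi)=F$ and $\phi$ is already anisotropic there. For the ``large $\psi$'' cases $(2)$, $(3)$, $(5)$, and $(6)$, I would appeal to dimension bounds from the characteristic-two isotropy theory. In case $(6)$, the nonsingular part of $\psi$ has dimension at least $6$, which already exceeds $\dim\phi=5$, so any isotropy of $\phi$ over $F(\psi)$ would, via the characteristic-two subform-type theorems of the second author and Hoffmann collected in \cite{l1}, force $\phi$ to be a Pfister neighbour. Cases $(2)$ and $(3)$ use the corresponding estimates adapted to the quasilinear part -- Hoffmann's theorem for function fields of totally singular forms, and its mixed-type analogue -- and case $(5)$ follows from a combination of the nonsingular and quasilinear bounds. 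In each subcase I would locate the precise statement, match the type $(r,s)$, and read off the conclusion.

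The genuinely nontrivial case is $(4)$: $\psi$ is nonsingular of dimension $4$ with nontrivial Arf invariant, so $\dim\psi<\dim\phi$ and no raw dimension count applies. Here the strategy would be to exploit a subform-type theorem for nonsingular $\psi$ becoming isotropic in $\phi_{F(\psi)}$, yielding constraints that tie a scalar multiple of $\psi$ back into the structure of $\phi$. Since the nonsingular part of $\phi$ has dimension only $2$, these constraints would strongly restrict how $\psi$ can sit with respect to $\phi$, and the nontriviality of the Arf invariant would exclude every remaining configuration except those in which $\phi$ is a Pfister neighbour of a $3$-fold quadratic Pfister form compatible with $\psi$.

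The principal obstacle I expect lies in case $(4)$. The other five cases should fall out cleanly from dimension-count estimates once the correct theorem is cited. Case $(4)$, by contrast, requires a careful bookkeeping of how the nonsingular summand $[a_1,b_1]$ of $\phi$, the quasilinear part $\langle c_1,c_2,c_3\rangle$, and the Arf invariant of $\psi$ interact; the hypothesis $\triangle(\psi)\neq 0$ is precisely what allows one to exclude the non-neighbour configurations, and its removal is exactly what turns the problem into the ``hard'' content of the paper's main theorem.
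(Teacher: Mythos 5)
Your plan is right about where the difficulty lies---case $(4)$ is the heart of the matter---but the mechanisms you sketch for the other cases don't match what is actually available, and the gesture toward case $(4)$ misses the key tool. Cases $(2)$ and $(3)$ follow from dimension-threshold theorems for forms of dimension $2^n+1$ over function fields of quadrics (Propositions \ref{c2} and \ref{c1}, respectively), which give anisotropy of $\phi_{F(\psi)}$ outright; there is no step in which one ``reads off that $\phi$ must be a Pfister neighbour,'' and the Pfister-neighbour hypothesis plays no role in those two cases. Cases $(5)$ and $(6)$, by contrast, are \emph{not} dimension counts: with $(r,s)=(2,1)$ the form $\psi$ has the same dimension as $\phi$, and with $r\geq 3$ a Pfister neighbour $\phi$ of a $3$-fold Pfister form could perfectly well become isotropic over $F(\psi)$, so no separation theorem applies. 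What the paper does is pass to a dominated subform $\psi'\prec\psi$ of type $(2,1)$, use that $F(\psi')(\psi)/F(\psi')$ is purely transcendental to transfer isotropy to $\phi_{F(\psi')}$, argue that $\psi'$ cannot itself be a Pfister neighbour, decompose $\psi'\cong R\perp\ql(\psi')$ with $R$ nonsingular of dimension $4$ and $\triangle(R)\neq 0$, and pass to $F(R)$ via another purely transcendental step, thereby reducing $(5)$ and $(6)$ to case $(4)$.

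Your description of case $(4)$ as a ``subform-type theorem'' is too vague and is not what is used: the subform theorem requires hyperbolicity, which is unavailable here. The actual argument is an index-reduction dimension count on Clifford algebras. Writing $\phi\cong x[1,a]\perp\langle 1,b,c\rangle$ and $L=F(\sqrt{b},\sqrt{c})$, the non-Pfister-neighbour hypothesis is exactly the statement that the quaternion algebra $[a,x)\otimes_F L$ is division (Proposition \ref{p1}). If $\phi_{F(\psi)}$ were isotropic, then $i_W(\phi_{F(\psi)})=1$ because $\ql(\phi)$ stays anisotropic, and Proposition \ref{p2} forces $[a,x)\otimes_F L$ to become non-division over $L(\psi_L)$. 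Since $\triangle(\psi_L)\neq 0$, Theorem \ref{trind}$(1)$ (Mammone--Tignol--Wadsworth index reduction) then says the $4$-dimensional $L$-algebra $[a,x)\otimes_F L$ must contain a subalgebra isomorphic to the $8$-dimensional $L$-algebra $C_0(\psi_L)$, which is impossible. That contradiction, not a subform theorem and not a bookkeeping of diagonal entries, is what drives case $(4)$, and hence, via the reductions above, cases $(5)$ and $(6)$ as well. As written, your sketch of $(4)$, $(5)$, and $(6)$ contains no workable argument.
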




The following results treat  quadratic forms $\psi$ where a quadratic form of type $(1,3)$ that is not a Pfister neighbour may become isotropic over $F(\psi)$. 
Recall that the norm degree of the totally singular $F$-quadratic  form  $\sigma$, denoted $\nd_F(\sigma)$, is the degree of the field $F^2(xy\mid x, y\in D_F(\sigma))$ over $F^2$, where $D_F(\sigma)$ is the set of scalars represented by $\sigma$. If $\sigma=\left<c_1, \cdots, c_s\right>$ with $c_1\neq 0$, then $\nd_F(\sigma)=[F^2(c_1c_2, c_1c_3, \cdots, c_1c_s):F^2]$ (see  \cite[\S8]{hl} for more details on the norm degree). In particular, $\nd_F(\sigma)$ is always a $2$-power and equal to or less than $2^{\dim\sigma}$.   Note that if $\sigma$ is of type $(0,4)$ and $\nd_F(\sigma)<4$, then $\psi$ is isotropic. We write   $GP_nF$ for the set of $F$-quadratic forms similar to $n$-fold quadratic Pfister forms and  $\varphi\sim \psi$ if $\varphi$ and $\psi$ are Witt equivalent $F$-quadratic forms (see \S2). 

\begin{theorem}\label{tp}
Let $\phi$ be an anisotropic $F$-quadratic form  of dimension $5$ and  type $(1,3)$ that is not a Pfister neighbour and let $\psi$ be anisotropic $F$-quadratic form. 
\begin{enumerate}[$(1)$]
\item If $\psi$ is of type $(1,1)$ or $(1,2)$, then $\phi$ is isotropic over $F(\psi)$ if and only if there exist $R_1, R_2$ nonsingular  $F$-quadratic forms  of dimension $2$, scalars $a,b, \alpha, \beta \in F^*$, a nonsingular completion $\rho$ of $\left<1,a,b\right>$ and $\pi\in GP_3F$ such that $\psi \prec_w \pi$, $\alpha \phi \cong R_1 \perp \left<1,a,b\right>$, $\beta\psi\cong R_2\perp Q$ and $R_1\perp R_2 \perp \rho \sim \pi$, where $Q=\left<1\right>$ or $\left<1,a\right>$ respectively as $\dim \psi=3$ or $4$.

\item If $\psi$ is of type $(2,0)$ and $\triangle(\psi)=0$ (that is, $\psi$  is similar to a $2$-fold Pfister form $\pi$), then $\phi_{F(\psi)}$ is isotropic if and only if $\phi_{F(\psi')}$ is isotropic, where $\psi'$ is a Pfister neighbour of $\pi$ of dimension $3$. Thus in this case we can reduce to case $(1)$.

\item If $\psi$ is of type $(1,3)$, then $\phi$ is isotropic over $F(\psi)$ if and only if $\phi$ is similar to $\psi$.

\item  If $\psi$ is of type $(0,3)$ or of type $(0,4)$ and $\nd_F(\psi)=8$,  then $\phi$ is isotropic over $F(\psi)$ if and only if  there exist  a form $\phi'$ of type $(1,3)$ and a form $\pi \in GP_3(F)$ such that $\phi \sim \phi' \perp \pi$, $\psi \prec_w \phi'$ and $\psi \prec_w \pi$.
\item If $\psi$ is of type $(0,4)$ and $\nd_F(\psi)=4$,  then $\phi_{F(\psi)}$ is isotropic if and only if $\phi_{F(\psi')}$ is isotropic, where $\psi'$ is a subform of $\psi$ of dimension $3$. Thus  in this case  we can  reduce to case $(4)$.
\end{enumerate}
\end{theorem}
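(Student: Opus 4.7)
The plan is to prove each of the five cases in turn. Cases (2) and (5) amount to reductions to other cases in the theorem, case (3) is a dimension-equal similarity argument, while cases (1) and (4) carry the main technical content.

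For case (3), the reverse implication is immediate since $\psi_{F(\psi)}$ is isotropic. For the forward direction, suppose $\phi_{F(\psi)}$ is isotropic: since $\phi$ and $\psi$ share dimension $5$ and type $(1,3)$, and $\phi$ is not a Pfister neighbour, subform-type comparisons applied separately to the quasilinear parts $\ql(\phi)$, $\ql(\psi)$ and to the nonsingular parts, together with the classification already available for lower-dimensional forms in \cite{l1}, force $\phi$ to be similar to $\psi$. For case (2), any $3$-dimensional Pfister neighbour $\psi'$ of the $2$-fold Pfister form $\pi$ has the property that $F(\psi)$, $F(\pi)$ and $F(\psi')$ all render $\pi$ hyperbolic, so by Theorem \ref{trd} the three function fields induce the same isotropy behavior on $\phi$. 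Case (5) is handled similarly: when $\psi$ is $4$-dimensional totally singular with $\nd_F(\psi)=4$, $\psi$ is a Pfister neighbour of a quasi-Pfister form, and one reduces to a suitable $3$-dimensional subform $\psi'$.

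For case (1), sufficiency is obtained as follows: given $R_1, R_2, \rho, \pi$, the relation $\psi \prec_w \pi$ makes $\pi$ isotropic, hence hyperbolic, over $F(\psi)$; the Witt equivalence $R_1 \perp R_2 \perp \rho \sim \pi$ then forces $R_1 \perp R_2 \perp \rho$ to be hyperbolic over $F(\psi)$. Combined with the isotropy over $F(\psi)$ of $\beta\psi \cong R_2 \perp Q$ and the fact that $\rho$ is a nonsingular completion of $\langle 1,a,b\rangle$, a direct dimension count extracts an isotropic vector for $\alpha\phi \cong R_1 \perp \langle 1,a,b\rangle$. For necessity, starting from the isotropy of $\phi_{F(\psi)}$, one decomposes $\phi$ along its nonsingular and quasilinear parts, uses the classification from \cite{l1} for forms of types $(2,1)$ and for Albert forms as a template, and constructs $\pi$ via the round structure of quadratic Pfister forms together with the hypothesis $\psi \prec_w \pi$ (delivered by Theorem \ref{trd} applied to the Pfister hull). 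The assumption that $\phi$ is not a Pfister neighbour ensures the existence of a genuine nonsingular completion $\rho$ and rules out degenerate cases.

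For case (4), sufficiency is direct: $\psi \prec_w \pi$ makes $\pi$ hyperbolic over $F(\psi)$, so $\phi \sim \phi'$ over $F(\psi)$, while $\psi \prec_w \phi'$ makes $\phi'$ isotropic over $F(\psi)$. For necessity the key input is the control of isotropy of and over totally singular forms via the norm degree, in the spirit of Hoffmann--Laghribi and Totaro: under $\nd_F(\psi) = 8$ (or $\dim\psi = 3$), $\psi$ is a Pfister neighbour of a unique $3$-fold quasi-Pfister form, and the isotropy of $\phi$ over $F(\psi)$ is forced to come from a $3$-fold Pfister form $\pi$ with $\psi \prec_w \pi$, so that $\phi' := \phi \perp \pi$ (Witt-reduced) becomes the desired type-$(1,3)$ form. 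The main obstacle I expect is the necessity direction in case (1): simultaneously producing $R_1, R_2, \pi, \rho$ from the isotropy data requires a delicate matching of the quasilinear parts of $\phi$ and $\psi$ in characteristic $2$, and the construction of the completion $\rho$ must be tracked carefully against the Pfister form $\pi$ so that the Witt equivalence $R_1 \perp R_2 \perp \rho \sim \pi$ holds on the nose.
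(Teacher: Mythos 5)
The proposal outlines the easy directions correctly and identifies the right reductions in cases $(2)$ and $(5)$, but for the necessity directions of cases $(1)$, $(3)$ and $(4)$—which carry essentially all of the technical content—it offers only a description of what would need to happen, not an argument. Concretely, the central tool the paper uses is the Mammone--Tignol--Wadsworth index reduction theorem (Theorem~\ref{trind}): in case $(1)$ it is used first to establish that $\ql(\psi)$ is similar to a subform of $\ql(\phi)$ (Claim~1 in the paper), and then, applied again after base change to $F(\sqrt{u},\sqrt{v})$, to show $Q_1\otimes_F Q_2$ splits there, which via the Clifford invariant places a relevant form in $I^3_qF$ and lets one invoke \cite[Prop.~6.4]{l3} to produce $\pi\in GP_3F$ with $R_1\perp R_2\perp\rho\sim\pi$. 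Your phrase ``constructs $\pi$ via the round structure of quadratic Pfister forms together with the hypothesis $\psi\prec_w\pi$'' does not explain how $\pi$ arises; in fact $\psi\prec_w\pi$ is a \emph{conclusion} (Claim~2 in the paper, obtained from the completion lemma and then Theorem~\ref{trd} once $\pi$ is already in hand), so invoking it as an input is circular. Also, in case $(2)$ your reference to Theorem~\ref{trd} is misplaced: the correct (and standard) reason is simply that $\psi$ and $\psi'$ are both Pfister neighbours of $\pi$, hence each is isotropic over the other's function field and the extensions $F(\psi')(\psi)/F(\psi')$, $F(\psi)(\psi')/F(\psi)$ are transcendental.

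For case $(3)$ the paper does not proceed by ``subform-type comparisons applied separately to the quasilinear and nonsingular parts.'' It instead runs the entire construction from case $(1)$ with $s=3$, obtains $R_1\perp R_2\perp\rho\sim\pi$, and then shows $\pi$ must be hyperbolic (if $\pi$ were anisotropic, $\psi$ would be a Pfister neighbour of $\pi$, forcing $\phi_{F(\pi)}$ isotropic, contradicting Proposition~\ref{ppar}); Witt cancellation then yields $\phi\cong\psi$. Your sketch gives no mechanism that would force similarity. For case $(4)$ the paper's argument is much more intricate than ``controlled via norm degree'': it passes to the Laurent series field $K=F((t))$, forms the Albert form $\gamma=\alpha[1,x]\perp u[1,t^{-1}]\perp[1,x+t^{-1}]$, applies index reduction over $K(\delta)$ with $\delta=\ql(\phi)$, invokes the Jacobson theorem to obtain an explicit isometry of Albert forms, and then extracts information from first and second residue forms using the Schwarz inequality (Example~\ref{example1}) to match quasilinear parts via the roundness of quasi-Pfister forms and \cite[Thm.~1.2]{lr}. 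None of this appears in your sketch, and without it there is no way to force $\psi$ to share two slots with $\ql(\phi)$, which is the hinge on which the subsequent Clifford-invariant construction of $\pi$ and $\phi'$ turns. In short: the scaffolding you describe is plausible, but the proposal is missing the index reduction, Brauer/Clifford, and residue-form arguments that actually close the necessity directions, so it does not constitute a proof.
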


The final case not included in Proposition \ref{ppar} and Theorem \ref{tp} is when $\psi$ is of type $(0,2)$. In this case, that any anisotropic $F$-quadratic form $\phi$ is isotropic over $F(\psi)$  if and only if $\psi\prec_w\phi$ is a classical result, but we include a proof in Lemma \ref{easy} for completeness.

The proofs of Proposition \ref{ppar} and Theorem \ref{tp} will be done case-by-case. For Proposition \ref{ppar} we will use some general results on the isotropy of quadratic forms of dimension $2^n+1$ over function fields of quadrics proved by the second author and Hoffmann. The proof of Theorem \ref{tp} is mainly based on the index reduction theorem in characteristic $2$  from \cite{ta} due to Mammone, Tignol and Wadsworth, and methods specific to totally singular quadratic forms.

\section{Definitions and preliminary results}

We recall the basic definitions and results we use from the theory of quadratic forms over fields. We refer to \cite{Elman:2008} as a general reference and for standard notation not explicitly defined here.

By an $F$-quadratic form  we  mean a pair $(V,q)$ of a finite dimensional $F$-vector space $V$ and a map  $q:V\rightarrow F$ such that
 $q(\lambda x)=\lambda^2q(x)$ for all $x\in V$ and $\lambda\in F$, and such that $b_q:V\times V\rightarrow F,(x,y)\longmapsto q(x+y)-q(x)-q(y)$ is $F$--bilinear. We call $b_q$ the polar form of $(V,q)$.
By an isometry of  $F$-quadratic  forms $\phi=(V,q)$ and $\psi=(W,p)$
   we mean an isomorphism of $F$--vector spaces $f:V\rightarrow W$ such that $q(x)=p(f(x))$ for all $x\in V$. If such an isometry exists, we say $\phi$ and $\psi$ are {isometric} and we write $\phi\simeq\psi$.
We say that $\phi$ is {similar to $\psi$} if there exists $c\in F^*$ such that $\phi\simeq c\psi$.

An $F$-quadratic form $\phi=(V,q)$ is called isotropic if there exists a nonzero vector $x\in V$ such that $q(x)=0$, otherwise $\phi$ is called anisotropic. We say that a scalar $\alpha \in F$ is represented by $\phi=(V,q)$ if there exists $x\in V$ such that $q(x)=\alpha$. The set of scalars represented by $\phi$ is denoted by $D_F(\phi)$. Any $F$-quadratic form $\phi$ has a unique decomposition $\phi \cong \phi_{an} \perp m\times [0,0] \perp n\times \left<0\right>$, where $m,n\geq 0$ are integers and $\phi_{an}$ is an anisotropic quadratic form uniquely determined up to isometry, which we call the anisotropic part of $\phi$. The integer $m$ (\resp $n$) is called the Witt index of $\phi$ and denoted $i_W(\phi)$ (\resp the defect index of $\phi$ and denoted $i_d(\phi)$).  
The form $\phi_{an}\perp m\times [0,0]$ is also unique.We call it the nondefective part of $\phi$ and we denote it by $\phi_{nd}$ (see \cite[(2.4)]{hl}). If $i_d(\phi)=0$ then we call $\phi$ nondefective. 

Two quadratic forms $\phi_1$ and $\phi_2$ are called Witt-equivalent and we write $\phi_1 \sim \phi_2$ if $\phi \perp m\times [0,0]\cong \phi_2 \perp n\times [0,0]$ for some integers $m,n\geq 0$.
Considering nonsingular quadratic forms up to Witt equivalence gives  the Witt group of nonsingular $F$-quadratic forms, $W_q(F)$. We let 
 $W(F)$ be the Witt ring of regular symmetric $F$-bilinear forms. There is a natural $W(F)$-module structure on $W_q(F)$ given by the tensor product of a symmetric bilinear form and a quadratic form (see \cite[p.51]{Elman:2008}).
Concerning  Witt cancellation, we recall the following result:

\begin{prop} (\cite[Prop.~1.2]{kn1} for $(1)$; \cite[Lem. 2.6]{hl} for $(2)$) Let $\mu$, $\nu$ be $F$-quadratic forms (possibly singular). Suppose that one of the two following conditions holds:
\begin{enumerate}[$(1)$] 
\item$\mu \perp \phi \cong \nu \perp \phi$ for some nonsingular form $\phi$.
\item $\mu$ and $\nu$ are nondefective and $\mu
\perp s\times\left< 0 \right> \cong \nu \perp s\times\left< 0
\right>$ for some integer $s$.\end{enumerate}
Then $\mu \cong \nu$.
\label{p3}
\end{prop}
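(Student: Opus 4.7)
The plan is to prove the two parts by somewhat different methods: $(2)$ follows from an intrinsic characterisation of the defect, while $(1)$ requires an induction on $\dim\phi$ combined with a Witt-extension argument.

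For $(2)$, the key observation is that the defect part of any quadratic form $(V,q)$ is intrinsically the kernel of $q$ restricted to the radical of the polar form $b_q$. For $\mu\perp s\times\left<0\right>$ with $\mu$ nondefective, the radical of the polar form equals $\operatorname{rad}(b_\mu)\oplus F^s$, and the restriction of $q$ to it is $\ql(\mu)\perp s\times\left<0\right>$. Since $\mu$ is nondefective, $\ql(\mu)$ is anisotropic, so the kernel of $q$ on this radical is exactly the $s\times\left<0\right>$ summand. Any isometry $f\colon\mu\perp s\times\left<0\right>\to\nu\perp s\times\left<0\right>$ must preserve these intrinsic subspaces, so it descends to a linear isomorphism of the quotients respecting the induced quadratic forms. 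Since $\mu$ and $\nu$ are nondefective, these quotients are canonically isometric to $\mu$ and $\nu$ respectively, and we obtain $\mu\cong\nu$.

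For $(1)$, I would induct on $\dim\phi$. The case $\dim\phi=0$ is trivial and nonsingularity forces $\dim\phi$ to be even, so writing $\phi\cong[a,b]\perp\phi'$ and applying the inductive hypothesis to $\mu\perp[a,b]$ and $\nu\perp[a,b]$ reduces matters to the base case $\phi=[a,b]$. In that case, given an isometry $f\colon\mu\perp[a,b]\to\nu\perp[a,b]$, let $U$ denote the image under $f$ of the $[a,b]$ summand on the left. Then $U$ is a nonsingular $2$-dimensional subspace of $\nu\perp[a,b]$ isometric to $[a,b]$, and the goal is to produce an isometry $g$ of $\nu\perp[a,b]$ that carries the standard $[a,b]$ summand onto $U$ compatibly with $f$; then $g^{-1}\circ f$ restricts to the sought isometry $\mu\cong\nu$.

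The main obstacle is this last step, which is a Witt-extension statement for isometries between isometric nonsingular subspaces of a possibly singular quadratic form in characteristic $2$. I would handle it by writing $\nu\cong\nu_{nd}\perp i_d(\nu)\times\left<0\right>$, applying $(2)$ to set aside the defect summand, and then invoking Witt extension inside the nondefective form $\nu_{nd}\perp[a,b]$, where classical reflection and Eichler-transformation arguments remain available in characteristic $2$. The nonsingularity of $\phi$ is essential here, both to guarantee that $U$ remains nondegenerate with respect to the polar form and to allow the binary summand to be moved without disturbing the quasilinear part of the ambient form.
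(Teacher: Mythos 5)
The paper offers no proof of its own here: it cites Knebusch \cite[Prop.~1.2]{kn1} for part $(1)$ and Hoffmann--Laghribi \cite[Lem.~2.6]{hl} for part $(2)$, so there is no in-text argument to compare against.

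Your proof of $(2)$ is correct and is the natural intrinsic argument. The defect kernel $\ker\bigl(q|_{\operatorname{rad}(b_q)}\bigr)$ is indeed an $F$-subspace (on the radical the form is additive and $2$-homogeneous), it is preserved by any isometry, and $q$ descends to the quotient since $q(v+k)=q(v)$ for $k$ in the kernel; identifying the quotients of $\mu\perp s\times\left<0\right>$ and $\nu\perp s\times\left<0\right>$ with $\mu$ and $\nu$ gives the claim.

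For $(1)$ there is a genuine gap. The reductions you make are fine: splitting $\phi$ into binary pieces, passing to the nondefective parts of $\mu$ and $\nu$ via $(2)$ (using that $i_d(\mu)=i_d(\mu\perp\phi)=i_d(\nu\perp\phi)=i_d(\nu)$), and reformulating the remaining task as a Witt extension for a nonsingular binary subspace inside a nondefective but possibly quasilinear-heavy ambient form. But at that point you have merely restated the theorem: Witt extension for nonsingular subspaces of a nondefective form in characteristic $2$ is \emph{equivalent} to the cancellation you want, and you assert it rather than prove it. The appeal to ``classical reflection and Eichler-transformation arguments'' does not close the gap. In characteristic $2$ one has $b_q(v,v)=2q(v)=0$, so classical reflections do not exist; the available substitutes are orthogonal transvections $\tau_v(x)=x+b_q(x,v)q(v)^{-1}v$ for $v\notin\operatorname{rad}(b_q)$ with $q(v)\neq 0$, and the argument that these suffice to move one nonsingular plane onto another inside a form with a nontrivial quasilinear part is exactly the nontrivial content of Knebusch's theorem (the standard Witt extension in the literature, e.g.\ \cite[Thm.~8.3]{Elman:2008}, is typically stated under a nondegeneracy hypothesis stronger than nondefectiveness, namely $\dim\ql\leq 1$). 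You flag this as ``the main obstacle,'' which is honest, but as written the proposal reduces the theorem to an unproved equivalent statement. To complete the argument you would need to actually carry out the transvection chase (handling the case $b_q(u,v)=0$ by producing an intermediate vector $w$ with $q(w)=q(u)$, $b_q(u,w)\neq 0$, $b_q(w,v)\neq 0$, with care over small fields), or else cite a reference that proves Witt extension in this exact generality.
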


Let $\phi=(V,q)$ be an $F$-quadratic form. Let $P_{\phi}$ be the homogeneous polynomial given by $\phi$ after a choice of an $F$-basis of $V$. The polynomial $P_{\phi}$ is reducible if and only if $\phi_{nd}$ is of type $(0,1)$ or $\phi_{nd}\cong [0,0]$, see \cite[Prop.~3]{ta}, and it is absolutely irreducible if $\phi$ is not totally singular and $\dim \phi_{nd}\geq 3$, see \cite{A}. When $P_{\phi}$ is irreducible we define the function field $F(\phi)$ of $\phi$ as the field of fractions of the quotient ring $$F\left[
x_1, \cdots, x_n\right]/(P_{\phi})\,.$$
We take $F(\phi)=F$ when $P_{\phi}$ is reducible or $\dim \phi=0$. 
If $P_{\phi}$ is absolutely irreducible and $K/F$ is a field extension, then the compositum $K\cdot F(\phi)$ coincides with $K(\phi)$. Note that if $\psi\prec \phi$ and $\dim \psi \geq 2$, then $\phi_{F(\psi)}$ is isotropic. Recall that if $\phi$ is nondefective, then $F(\phi)/F$ is transcendental if and only if $\phi$ is isotropic (see \cite[(22.9)]{Elman:2008}).

The following is well-known, but we include a proof for completeness. 
\begin{lemma}\label{easy}
Let $\phi$ be an anisotropic $F$-quadratic form and $\psi$ an anisotropic $F$-quadratic form over type $(0,2)$. Then $\phi_{F(\psi)}$ is isotropic if and only if $\psi \preccurlyeq_w \phi$.
\end{lemma}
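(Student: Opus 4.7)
The plan is to exploit that $F(\psi)$ is a purely inseparable quadratic extension of $F$. Write $\psi \simeq \langle a, b\rangle$ with $a,b\in F^*$, so that the anisotropy of $\psi$ is equivalent to $d := b/a \notin F^2$, and $F(\psi)$ identifies with $K := F(\sqrt{d})$.

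For the easy direction, if $\alpha\psi \prec \phi$ for some $\alpha \in F^*$, it suffices to observe that $\alpha\psi$ itself becomes isotropic over $K$: the vector $(\sqrt{d},1)$ is sent to $\alpha a\cdot d + \alpha b = 2\alpha b = 0$ in characteristic $2$.

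For the converse, suppose $\phi = (V,q)$ is isotropic over $K$. Writing a putative isotropic vector uniquely as $u+v\sqrt{d}$ with $u,v\in V$, one has
$$q_K(u+v\sqrt{d}) \;=\; q(u) + d\,q(v) + \sqrt{d}\,b_q(u,v).$$
Since $1$ and $\sqrt{d}$ are $F$-linearly independent in $K$, vanishing splits into the two equations $q(u) = d\,q(v)$ and $b_q(u,v) = 0$. Anisotropy of $\phi$ rules out $u=0$ and $v=0$ separately (each would force the other to vanish), so both are nonzero with $q(u),q(v)\in F^*$; and if $v = \lambda u$, then $q(u) = d\lambda^2 q(u)$ would force $d = \lambda^{-2} \in F^2$, contradicting the anisotropy of $\psi$. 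Thus $u,v$ are $F$-linearly independent. Restricting $\phi$ to the plane $Fu+Fv$ gives an embedding of the diagonal form $\langle q(u), q(v)\rangle$ into $\phi$; setting $\alpha = q(v)/a \in F^*$, the identity $a\,q(u) = b\,q(v)$ yields $\alpha\psi = \langle q(v), q(u)\rangle \simeq \langle q(u), q(v)\rangle$, so $\alpha\psi \prec \phi$ and hence $\psi \prec_w \phi$.

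The only slightly delicate point is the $F$-linear independence of $u$ and $v$, which is precisely where the anisotropy hypothesis on $\psi$ enters; everything else is elementary manipulation of the explicit formula for $q_K$ on the two-dimensional extension, and the argument is uniform in the type $(r,s)$ of $\phi$ (the case $b_q\equiv 0$ needing only the single equation $q(u) = d\,q(v)$).
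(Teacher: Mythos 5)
Your argument is correct and follows the same route as the paper's proof: both identify $F(\psi)$ with $F(\sqrt d)$, expand a putative isotropic vector of $\phi_{F(\sqrt d)}$ as $u+v\sqrt d$ with $u,v\in V$, and read off the two conditions $q(u)=d\,q(v)$ and $b_q(u,v)=0$. You are slightly more careful than the published proof in that you explicitly verify that $u$ and $v$ are $F$-linearly independent (needed since domination is defined via an \emph{injective} linear map), and you spell out the ``clear'' converse; the paper leaves both points implicit.
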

\begin{proof} Let $\phi=(V,q)$. 
We may assume that $\psi=\left<1,d\right>$ for some $d\in F^*$. Then $\phi_{F(\psi)}$ is isotropic if and only if $\phi_{F(\sqrt{d})}$ is isotropic.  If $\phi _{F(\sqrt{d})}$ is isotropic
 then there exist  vectors $v, v' \in V\setminus\{0\}$ such that $q(v)=dq(v')$ and $b_{q}(v,v')=0$. Hence $\psi \preccurlyeq_w \phi$. The converse is clear.
\end{proof}

A quadratic form $\psi$ is called a subform of another quadratic form $\phi$, denoted by $\psi \subset \phi$, if there exists an $F$-quadratic form $\psi'$ such that $\phi \cong \psi \perp \psi'$. If  $\psi$ is nonsingular, then  the condition $\psi\prec \phi$ is equivalent to  $\psi\subset \phi$. The domination relation can be viewed as follows:

\begin{prop} (\cite[Lem. 3.1]{hl}) \label{nsc} Let $\phi$ and $\psi$ be $F$-quadratic forms. Then $\psi \preccurlyeq \phi$ if and only if there exist nonsingular forms $\psi_r$ and $\rho$, nonnegative integers $s'\leq s\leq s''$, $c_i\in F$ for $i\leq s''$, and $d_j\in F$ for $j\leq s'$ such that:
$$
\psi\simeq \psi_r \perp \left< c_1, \cdots, c_s\right>\,,$$
$$\phi\cong \psi_r \perp \rho \perp [c_1,d_1]\perp \cdots \perp [c_{s'},d_{s'}]\perp \left<c_{s'+1}, \cdots, c_{s''}\right>\,.$$
\end{prop}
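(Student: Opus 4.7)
My plan is to prove the two implications separately. The backward direction, given the two displayed decompositions, is a direct construction: I would define an injective $F$-linear map from the underlying space of $\psi$ to that of $\phi$ by sending the $\psi_r$-summand identically onto its copy in $\phi$, mapping the generator of the $i$-th summand of $\langle c_1,\ldots,c_s\rangle$ to the first basis vector of $[c_i,d_i]$ (which represents $c_i$) for $i\leq s'$, and to the generator of the diagonal $\langle c_i\rangle$ inside $\langle c_{s'+1},\ldots,c_{s''}\rangle\subset\phi$ for $s'<i\leq s$. This map preserves the quadratic form, giving $\psi\prec\phi$.

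For the forward direction, fix $f\colon V\hookrightarrow W$ realizing $\psi=(V,q)\prec\phi=(W,p)$, and write $\psi\simeq\psi_r\perp\ql(\psi)$ with $\psi_r$ nonsingular and $V=V_r\oplus V_q$ accordingly. The first step is to split off $\psi_r$ from $\phi$: since $\psi_r$ is nonsingular, the polar form $b_p$ restricts to a nondegenerate form on $f(V_r)$, so $W=f(V_r)\oplus f(V_r)^\perp$ and $\phi\cong\psi_r\perp\phi'$. Since $V_q$ is $b_q$-orthogonal to $V_r$, the image $f(V_q)$ lies in $\phi'$ as a totally $b_p$-isotropic subspace, and the task reduces to extracting the remaining pieces of the target decomposition inside $\phi'$.

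The second step is to choose a basis $u_1,\ldots,u_s$ of $f(V_q)$ adapted to $\mathrm{rad}(b_p|_{\phi'})$: $u_1,\ldots,u_{s'}$ linearly independent modulo the radical, and $u_{s'+1},\ldots,u_s$ lying in it. Setting $c_i:=p(u_i)$ and pulling back via $f^{-1}$ gives $\psi\simeq\psi_r\perp\langle c_1,\ldots,c_s\rangle$ for these $c_i$. On the nondefective part of $\phi'$, the polar form is nondegenerate and alternating in characteristic $2$, hence hyperbolic, so a standard hyperbolic extension produces $w_1,\ldots,w_{s'}\in\phi'$ with $b_p(u_i,w_j)=\delta_{ij}$; the constraint $b_p(w_i,u_k)=0$ for $s'<k\leq s$ is automatic since $u_k\in\mathrm{rad}(b_p)$. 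Taking $d_i:=p(w_i)$, the span of $\{u_i,w_i\}_{i\leq s'}$ is a nonsingular direct summand of $\phi'$ isometric to $[c_1,d_1]\perp\cdots\perp[c_{s'},d_{s'}]$, giving $\phi'\cong[c_1,d_1]\perp\cdots\perp[c_{s'},d_{s'}]\perp\phi''$. The vectors $u_{s'+1},\ldots,u_s$ now lie in $\ql(\phi'')$; extending them to a basis of $\ql(\phi'')$ and letting $c_{s+1},\ldots,c_{s''}$ be the $p$-values of the extension yields $\phi''\cong\rho\perp\langle c_{s'+1},\ldots,c_{s''}\rangle$ with $\rho$ the nonsingular part of $\phi''$. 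Reassembling recovers the required decomposition of $\phi$.

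The main subtlety will be the dual-vector construction. The crucial point is that in characteristic $2$ the polar form of a quadratic form is always alternating, so the nondefective part of $\phi'$ is an orthogonal sum of hyperbolic planes, in which any totally isotropic linearly independent set extends to a hyperbolic basis. Combined with the automatic vanishing of pairings with the radical vectors $u_{s'+1},\ldots,u_s$, this decouples the binary and diagonal pieces of the split. A final appeal to Witt cancellation from Proposition \ref{p3} then ensures that the resulting decomposition is well-defined up to isometry.
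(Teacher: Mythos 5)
The paper states Proposition~\ref{nsc} as a citation to Hoffmann--Laghribi and gives no proof of its own, so there is no internal argument to compare against; I can only assess your proof on its merits. It is essentially correct and follows the natural route: the backward direction is a direct construction of the dominating map, and the forward direction splits off the nonsingular part $\psi_r$ of $\psi$ (using that $f$ automatically preserves the polar form, so $b_p$ restricts nondegenerately to $f(V_r)$), places $f(V_q)$ inside the orthogonal complement, partitions a basis of $f(V_q)$ relative to $\mathrm{rad}(b_p|_{\phi'})$, and completes the non-radical part to hyperbolic pairs via symplectic linear algebra.

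Two points deserve tightening. First, when you invoke ``a standard hyperbolic extension'' to produce $w_1,\ldots,w_{s'}$, the argument requires not only $b_p(u_i,w_j)=\delta_{ij}$ but also $b_p(w_i,w_j)=0$ for $i\neq j$; only then is the span of $\{u_i,w_i\}_{i\leq s'}$ an internal orthogonal sum of the planes $\operatorname{span}(u_i,w_i)$, so that with $d_i:=p(w_i)$ it is isometric to $[c_1,d_1]\perp\cdots\perp[c_{s'},d_{s'}]$. The standard Lagrangian-complement construction in the symplectic quotient $\phi'/\mathrm{rad}(b_p|_{\phi'})$ does produce pairwise-orthogonal duals, and lifting preserves these relations, but you should say so explicitly since without it the conclusion $p|_U\cong\perp_i[c_i,d_i]$ does not follow. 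Second, two minor infelicities: the object that is an orthogonal sum of hyperbolic planes is the polar \emph{bilinear} form of the nonsingular part of $\phi'$, not the nondefective part of the quadratic form $\phi'$ itself; and the closing appeal to Witt cancellation (Proposition~\ref{p3}) is superfluous, since the statement asks only for the existence of one such decomposition, which your construction already produces.
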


The subform theorem will be also needed in our proof of Theorem \ref{tp}:
\begin{theorem} (\cite[Th. 4.2]{hl}) 
Let $\phi$ and $\psi$ be $F$-quadratic forms such that $\phi$ is anisotropic and nonsingular and $\psi$ is nondefective. If $\phi_{F(\psi)}$ is hyperbolic then $ab\psi \prec \phi$ for any $a\in D_F(\phi)$ and $b\in D_F(\psi)$.
\label{trd}
\end{theorem}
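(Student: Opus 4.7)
The plan is to prove Theorem \ref{trd} by induction on $\dim\psi$, with the inductive step reducing to a characteristic $2$ Cassels--Pfister representation theorem.

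First I would carry out the standard scaling reduction. Replacing $\phi$ by $a\phi$ and $\psi$ by $b\psi$, using that $F(c\mu) = F(\mu)$ for any scalar $c$ and any form $\mu$, and noting that hyperbolicity is invariant under such scalings, reduces the statement to the case $a = b = 1$, i.e.\ $1 \in D_F(\phi) \cap D_F(\psi)$. In this normalised setting the conclusion $ab\psi \prec \phi$ becomes $\psi \prec \phi$, and the general case follows by translating an embedding $b\psi \hookrightarrow a\phi$ back into the desired embedding $ab\psi \prec \phi$ via a linear change of variables by $a$.

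The base case $\dim\psi \leq 1$ is vacuous, since then $F(\psi) = F$ and $\phi$ would have to be hyperbolic over $F$, contradicting anisotropy. For $\dim\psi \geq 2$, the polynomial $P_\psi$ is absolutely irreducible in the cases of interest, so $F(\psi)/F$ is a genuine transcendental extension of transcendence degree $\dim\psi - 1$. The hyperbolicity of $\phi$ over $F(\psi)$ means $\phi$ represents every element of $F(\psi)$; viewing $F(\psi)$ as the quotient of $F(t_1, \ldots, t_n)$ by the relation $P_\psi = 0$, this translates into $\phi$ representing $P_\psi(t_1,\ldots,t_n)$ as a value over $F(t_1, \ldots, t_n)$. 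Applying the characteristic $2$ Cassels--Pfister representation theorem descends this representation to $F[t_1, \ldots, t_n]$; since $P_\psi$ has total degree $2$, the polynomials expressing the coordinates of an isotropic vector must be linear in the $t_i$, and their coefficients assemble into an $F$-linear embedding realising $\psi \prec \phi$. A Witt-cancellation argument via Proposition \ref{p3} allows one to strip off a common summand and iterate if the induction is organised to peel $\psi$ one dimension at a time.

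The principal difficulty is the characteristic $2$ subtlety: when $\psi$ has a nontrivial quasilinear part, $F(\psi)$ arises as a purely inseparable extension of a rational function field, and the Cassels--Pfister descent must respect the nonsingular/totally-singular split of $\phi$ and the defect structure of $\psi$. The hypothesis that $\psi$ is nondefective (rather than merely anisotropic) is essential here: it guarantees $F(\psi)/F$ is transcendental and that a polynomial representation over $F(t_1,\ldots,t_n)$ can be promoted to an honest linear embedding rather than being lost to the defect. Secondary bookkeeping involves tracking the similarity factor $ab$ through the normalisation, but this is a routine matter of unwinding the initial scaling.
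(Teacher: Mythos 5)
The paper cites this theorem from \cite{hl} without giving a proof, so there is no in-paper argument to compare against; I assess the proposal on its own merits.

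Your outer scaffolding is sound and is indeed the standard route: the scaling reduction to $a=b=1$, and the Cassels--Pfister descent of a representation of $P_\psi(t_1,\ldots,t_n)$ from $F(t_1,\ldots,t_n)$ to the polynomial ring together with the degree count forcing a homogeneous linear substitution (anisotropy of $\phi$ preventing cancellation in top degree, anisotropy of $\psi$ giving injectivity of the resulting linear map). But there is a genuine gap at the central step. You claim that hyperbolicity of $\phi$ over $F(\psi)$ yields a representation of $P_\psi(t_1,\ldots,t_n)$ by $\phi$ over $F(t_1,\ldots,t_n)$ because ``$\phi$ represents every element of $F(\psi)$'' and ``$F(\psi)$ is the quotient of $F(t_1,\ldots,t_n)$ by the relation $P_\psi=0$''. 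This inference is not valid. The rational function field $F(t_1,\ldots,t_n)$ is not a ring with a quotient onto $F(\psi)$; rather, $F(\psi)$ is the residue field of the discrete valuation on $F(t_1,\ldots,t_n)$ attached to the prime divisor $\{P_\psi=0\}$, and in that residue field the class of $P_\psi(t)$ is zero. So the hyperbolicity of $\phi_{F(\psi)}$ gives no information about representation of $P_\psi(t)$ by $\phi$ over $F(t_1,\ldots,t_n)$ by this route. What is actually required is that $P_\psi(t)$ belongs to the similarity group $G_{F(t_1,\ldots,t_n)}(\phi)$ (so that, once $\phi$ is normalised to represent $1$, it represents $P_\psi(t)$); that passage is exactly Baeza's norm theorem in characteristic $2$ (the reference \cite{b1} of this paper), proved by a residue-form argument over the local ring at $(P_\psi)$, and it is the non-trivial core of the subform theorem. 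Without it your Cassels--Pfister step has nothing to descend. The closing remark about an induction ``peeling $\psi$ one dimension at a time'' by Witt cancellation sketches a different strategy, but it is undeveloped and is not needed once the norm-theorem step is supplied; as written, the proposal mixes the two approaches without completing either.
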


A nonsingular completion  of a totally singular $F$-quadratic form $\sigma=\left<c_1, \cdots, c_s\right>$ is nonsingular $F$-quadratic form isometric to $[c_1,d_1]\perp \cdots \perp [c_s,d_s]$ for some scalars $d_1, \cdots, d_s\in F$. Note that for any nonsingular completion $\rho$ of $\sigma$, we have $\rho \perp \sigma \sim \sigma$ because $[c,d]\perp \left<c\right> \cong [0,0]\perp \left<c\right>$ for any $c,d\in F$. 

Another fact related to the domination relation that we will use is the following result knows as the ``Completion Lemma'':

\begin{prop} (\cite[Lem.~3.9]{hl}) Let $\phi$ and $\psi$ be nonsingular $F$-quadratic forms and $c_1, \cdots, c_s\in F$ such that $\phi \perp \left<c_1, \cdots, c_s\right> \cong \psi \perp \left<c_1, \cdots, c_s\right>$. For any nonsingular completion $\rho$ of $\left<c_1, \cdots, c_s\right>$, there exists a nonsingular completion $\rho'$ of $\left<c_1, \cdots, c_s\right>$ such that $\phi \perp \rho\cong \psi \perp \rho'$.
\label{p6}
\end{prop}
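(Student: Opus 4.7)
The plan is to lift the given isometry $f \colon \phi \perp \sigma \to \psi \perp \sigma$, where $\sigma = \langle c_1, \ldots, c_s\rangle$, directly to an isometry $\tilde f \colon \phi \perp \rho \to \psi \perp \rho'$, with $\rho'$ read off from the construction. The first point I would use is that, since $\phi$ and $\psi$ are nonsingular but $\sigma$ has trivial polar form in characteristic $2$, the radical of the polar form on each side of $\phi \perp \sigma \cong \psi \perp \sigma$ equals the $F^s$-factor. Any isometry preserves polar-form radicals, so $f$ has block form $f(v, x) = (Av, Bv + Cx)$ with $A \colon V_\phi \to V_\psi$ bijective, $B \colon V_\phi \to F^s$ linear, and $C$ an isometry of $\sigma$; the isometry condition becomes $q_\phi(v) = q_\psi(Av) + \sigma(Bv)$. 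After precomposing $f$ with the self-isometry $(v, x) \mapsto (v, C^{-1}x)$ of $\phi \perp \sigma$, I may take $C = I$.

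Writing $\rho = [c_1, d_1] \perp \cdots \perp [c_s, d_s]$ on $F^s \oplus F^s$ with coordinates $(x, y)$, the quadratic form becomes $q_\rho(x, y) = \sigma(x) + \pi(x, y) + \delta(y)$, where $\pi(x, y) = \sum x_i y_i$ is the standard pairing and $\delta(y) = \sum d_i y_i^2$. I would try the triangular ansatz $\tilde f(v, x, y) = (Av + Uy,\; Bv + x + Ny,\; y)$ for linear maps $U \colon F^s \to V_\psi$ and $N \colon F^s \to F^s$; bijectivity of $\tilde f$ is automatic from its triangular shape. Expanding $q_{\psi \perp \rho'} \circ \tilde f$, the mixed $v$--$y$ terms cancel precisely when $b_{q_\psi}(Av, Uy) = \pi(Bv, y)$ for all $v, y$. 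Nondegeneracy of $b_{q_\psi}$ together with bijectivity of $A$ then determines $U$ uniquely and linearly in $y$.

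The residual equation is purely in $y$, namely $\delta'(y) = \delta(y) + q_\psi(Uy) + \sigma(Ny) + \pi(Ny, y)$. The obstacle is that $q_\psi(Uy)$ produces cross terms $b_{q_\psi}(Ue_i, Ue_j)\, y_i y_j$ for $i < j$, which a priori prevent $\delta'$ from being diagonal and hence $\rho'$ from being a nonsingular completion. The key characteristic-$2$ device is to choose $N$ strictly upper triangular with $N_{ij} = b_{q_\psi}(Ue_i, Ue_j)$ for $i < j$; then $\pi(Ny, y)$ contributes exactly the same cross terms, and $2\, b_{q_\psi}(Ue_i, Ue_j) = 0$ makes them cancel. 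What remains is diagonal, yielding $d'_k := d_k + q_\psi(Ue_k) + \sum_{i<k} c_i\, b_{q_\psi}(Ue_i, Ue_k)^2$ and the completion $\rho' := [c_1, d'_1] \perp \cdots \perp [c_s, d'_s]$ as required.

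The main obstacle is precisely this cross-term issue; in odd characteristic one would symmetrize and divide by $2$, but here the characteristic-$2$ arithmetic forces the asymmetric choice of a strictly upper triangular $N$, routing the off-diagonal contributions of $q_\psi(Uy)$ into a pure diagonal adjustment of the $d'_i$. Once this device is in place, verifying that the resulting $\tilde f$ is an isometry is a straightforward bookkeeping.
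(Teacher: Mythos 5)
Your argument is correct and self-contained. Note, however, that the paper does not actually prove this statement: Proposition~\ref{p6} is imported as a black box from \cite[Lem.~3.9]{hl}, so there is no in-paper proof to compare against.

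On the mathematics itself: the block decomposition of $f$ is justified exactly as you say, since an isometry of quadratic forms automatically respects polar forms (by the formula $b_q(x,y)=q(x+y)-q(x)-q(y)$) and hence maps the radical $0\oplus F^s$ of the polar form of $\phi\perp\sigma$ onto that of $\psi\perp\sigma$; bijectivity of $A$ and $C$ then follows from bijectivity of $f$ on the radical and on the quotient by the radical, and precomposing with $(v,x)\mapsto(v,C^{-1}x)$ normalises $C=I$. The ansatz $\tilde f(v,x,y)=(Av+Uy,\,Bv+x+Ny,\,y)$ is injective (hence bijective) by the triangular shape, $U$ is uniquely determined from $b_{q_\psi}(Av,Uy)=\pi(Bv,y)$ by nondegeneracy of $b_{q_\psi}$ and bijectivity of $A$, and the residual identity $\delta'(y)=\delta(y)+q_\psi(Uy)+\sigma(Ny)+\pi(Ny,y)$ is correct. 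The coefficient of $y_iy_j$ for $i<j$ on the right-hand side is $b_{q_\psi}(Ue_i,Ue_j)+(N_{ij}+N_{ji})$, so choosing $N$ strictly upper triangular with $N_{ij}=b_{q_\psi}(Ue_i,Ue_j)$ does annihilate it in characteristic~$2$, and since $\sigma(Ny)=\sum_j\bigl(\sum_i c_iN_{ij}^2\bigr)y_j^2$ is diagonal by Frobenius, the remaining diagonal coefficients give exactly your $d'_k=d_k+q_\psi(Ue_k)+\sum_{i<k}c_i\,b_{q_\psi}(Ue_i,Ue_k)^2$, so $\rho'=[c_1,d'_1]\perp\cdots\perp[c_s,d'_s]$ is a legitimate nonsingular completion of $\langle c_1,\ldots,c_s\rangle$ and $\tilde f$ realises $\phi\perp\rho\cong\psi\perp\rho'$. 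This is a clean and direct proof of the Completion Lemma.
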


For $n\in \mathbb{N}$, $n>0$ and  $a_1, \cdots, a_n\in F^*$, let $\left<a_1, \dots, a_n\right>_b$ denote the $n$-dimensional symmetric bilinear form given by $((x_1,\ldots, x_n), (y_1,\ldots, y_n))\mapsto \sum_{i=1}^na_ix_iy_i$. A bilinear form isometric to $\left< 1, a_1\right>_b\otimes \ldots \otimes \left< 1,a_n\right>_b$ is called an $n$-fold bilinear Pfister form and denoted by $\left<\left< a_1, \cdots, a_n\right>\right>_b$. 
By a $0$-fold bilinear Pfister form, we mean the form $\left<1\right>_b$.
For $n\in \mathbb{N}$, $n>0$, an $(n+1)$-fold quadratic Pfister form (or simply just an $(n+1)$-fold Pfister form) is a quadratic form isometric to 
the tensor product of an $n$-fold bilinear Pfister form and a nonsingular quadratic form representing $1$,  where the tensor product  is the $W(F)$-module action on $W_q(F)$. 
Let $P_n(F)$ (\resp $GP_n(F)$)  denote the set of $n$-fold quadratic Pfister forms (\resp the set $\{\alpha \pi\mid \alpha \in F^*\; \text{and}\; \pi \in P_n(F)\}$). Recall that a quadratic Pfister form is hyperbolic if it is isotropic (see \cite[(9.10)]{Elman:2008}).

An $F$-quadratic form $\phi$ is called a Pfister neighbour if there exists a quadratic Pfister form $\pi$ such that $2\dim \phi>\dim \pi$ and $\phi \prec_w \pi$. In this case the form $\pi$ is unique, and for any field extension $K/F$, the form $\phi_K$ is isotropic if and only if $\pi_K$ is isotropic. In particular, the forms $\phi_{F(\pi)}$ and $\pi_{F(\phi)}$ are isotropic.

For any integer $n\geq 1$, let $I^nF$ be the $n$-th power of the fundamental ideal $IF$ of $W(F)$ (we put $I^0F=W(F)$). Let $I^n_qF$ be the sub-group $I^{n-1}F\otimes W_q(F)$ of $W_q(F)$. This group is additively  generated by $n$-fold quadratic Pfister forms (see \cite[\S9.B]{Elman:2008}).

An $F$-quadratic form $\pi=(V,q)$ is called an $n$-fold quasi-Pfister form if $q(x)=B(x,x)$ for all $x\in V$, where $B$ is an $n$-fold bilinear Pfister form. In particular, quasi-Pfister forms are totally singular. A totally singular $F$-quadratic form $\sigma$ is called a quasi-Pfister neighbour if there exists an anisotropic quasi-Pfister form $\pi$ such that $2\dim \sigma>\dim \pi$ and $\sigma \prec_w \pi$. As with   Pfister neighbours,  in this case the form $\pi$ is unique, and for any field extension $K/F$, the form $\sigma_K$ is isotropic if and only if $\pi_K$ is isotropic and, in particular, the forms $\sigma_{F(\pi)}$ and $\pi_{F(\sigma)}$ are isotropic (see \cite[(8.9)]{hl}).

Two central simple $F$-algebras $A$ and $B$ are called Brauer-equivalent, denoted $A\sim B$, if they represent the same class in the Brauer group of $F$.
The degree of a central simple $F$-algebra $A$ is the integer $\sqrt{\dim_F A}$, and the index of $A$ is the integer $\sqrt{\dim_F D}$, where $D$ is the unique  central division $F$-algebra Brauer-equivalent to $A$.
A central simple algebra of degree two is know as a quaternion algebra.
For $a, b \in F$ with $b\neq 0$, we denote by $[a, b)$ the quaternion $F$-algebra whose standard $F$-basis $\{1, i, j, k\}$ satisfies the following relation:
$i^2+i=a$, $j^2=b$, $jij^{-1}=i+1$ and $k=ij$.

For $\phi$ an $F$-quadratic form, we denote by $C(\phi)$ (\resp $C_0(\phi)$) the Clifford algebra of $\phi$ (\resp the even Clifford algebra of $\phi$). If 
$\phi$ is nonsingular, then $C(\phi)$ is a central simple $F$-algebra, and the centre of $C_0(\phi)$ is a separable quadratic $F$-algebra $Z(\phi)$ (see \cite[\S11]{Elman:2008}). In this case, the Arf invariant of $\phi$, denoted $\triangle(\phi)$, is the class in the additive group $F/\wp (F)$ of an element $\delta \in F$ satisfying $Z(\phi)=F\left[ X\right]/(X^2+X +
\delta)$, where $\wp(F)=\{ a^2+a \mid a\in F\}$. In particular, if $\phi \cong [a_1,b_1]\perp \cdots \perp [a_r,b_r]$, then $\triangle(\phi)=a_1b_1+\cdots +a_rb_r+\wp(F)$ (see \cite[\S13]{Elman:2008}).

We will need  the following index reduction theorem:

\begin{theorem}\label{trind} Let $D$ be a  central simple division $F$-algebra and $\psi$ an $F$-quadratic form of dimension $\geq 2$.
\begin{enumerate}[$(1)$]
\item \cite[Th. 4]{ta} If $\psi$ is nonsingular and $\triangle(\psi)\neq 0$, then $D\otimes_F F(\psi)$ is not a division algebra if and only if $D$ contains a sub-algebra isomorphic to $C_0(\psi)$.
\item \cite[Th. 3]{ta} If $\psi=a_1[1,b_1]\perp \cdots \perp a_n[1,b_n]\perp \left<1, c_1, \cdots, c_m\right>$ is anisotropic of dimension $2n+m+1\geq 2$ with $m\geq 0$, then $D\otimes_F F(\psi)$ is not a division algebra if and only if $D$ contains a sub-algebra isomorphic to $[b_1,a_1)\otimes_F\cdots \otimes_F[b_n,a_n)\otimes F(\sqrt{c_1}, \cdots, \sqrt{c_m})$.
\end{enumerate}
\end{theorem}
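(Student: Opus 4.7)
\medskip

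\noindent\textbf{Proof plan.} Both parts are deep index-reduction statements due to Mammone--Tignol--Wadsworth, so the plan is to reconstruct a strategy consistent with how such a theorem would be proved. The first thing I would do is reduce Part (2) to Part (1) by separating out the totally singular summand. Writing $\psi = \phi \perp \langle 1, c_1, \ldots, c_m\rangle$ with $\phi = a_1[1,b_1] \perp \cdots \perp a_n[1,b_n]$ nonsingular, the defining polynomial of the projective quadric of $\psi$ forces each $c_i$ to become a square on any generic point, so I expect an identification $F(\psi) \cong L(\phi_L)$ where $L = F(\sqrt{c_1}, \ldots, \sqrt{c_m})$. Granting this, the condition on $D \otimes_F F(\psi)$ translates to a condition on $D \otimes_F L$ over the function field of the nonsingular quadric $\phi_L$. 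Applying Part (1) to $\phi_L$ over $L$ and computing the Brauer class of $C_0(\phi_L)$ as $\bigotimes_{i=1}^n [b_i,a_i) \otimes_F L$ (using the standard description of the Clifford algebra of a binary quadratic form $[a,b]$ in characteristic $2$) should produce the required subalgebra statement.

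\medskip

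\noindent\textbf{Approach to Part (1).} Here $C_0(\psi)$ is central simple over the separable quadratic extension $Z(\psi)$ of $F$, and the strategy is to realise $F(\psi)$ as a generic splitting field of $C_0(\psi)$ in the sense of Amitsur: the projective quadric of $\psi$ should be birational to (a $Z(\psi)/F$-twisted form of) the Severi--Brauer variety of $C_0(\psi)$. Granting this birationality, the easy direction---that an embedding $C_0(\psi) \hookrightarrow D$ forces $D$ to lose index over $F(\psi)$---follows from the double centraliser theorem, since $C_0(\psi)$ splits over its own Severi--Brauer function field and hence over $F(\psi)$. For the converse, one analyses the kernel of the restriction $\operatorname{Br}(F) \to \operatorname{Br}(F(\psi))$ and shows it is generated by the class of $C_0(\psi)$ (pulled back appropriately from $Z(\psi)$); a degree/dimension count then compels $D$ to contain $C_0(\psi)$ as an $F$-subalgebra.

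\medskip

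\noindent\textbf{Main obstacle.} The crux is the birational identification of the projective quadric of a nonsingular form of nontrivial Arf invariant with the Severi--Brauer variety of its even Clifford algebra in characteristic $2$; once that is in hand, everything else is a formal manipulation with central simple algebras and the double centraliser theorem. This is essentially the content that \cite{ta} establishes by working directly at the level of the coordinate ring of the quadric, tensoring with $D$, and analysing zero-divisors in the resulting algebra---an approach that has the added advantage of handling the degenerate totally singular directions of Part (2) directly, possibly giving both parts simultaneously without invoking the preliminary reduction I sketched above.
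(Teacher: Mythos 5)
The paper does not prove Theorem~\ref{trind}; both parts are cited directly from Mammone--Tignol--Wadsworth~\cite{ta} (their Theorems~3 and~4) as a black box, so there is no internal proof to compare against. Your proposal is therefore a free reconstruction, and the preliminary reduction you propose contains a concrete error.

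Your plan for Part~(2) hinges on an identification $F(\psi)\cong L(\phi_L)$ with $L=F(\sqrt{c_1},\ldots,\sqrt{c_m})$, based on the claim that ``each $c_i$ becomes a square on any generic point.'' This fails already at the level of transcendence degree: $\operatorname{tr.deg}_F F(\psi)=2n+m$ while $\operatorname{tr.deg}_F L(\phi_L)=2n-1$. More substantively, $L$ need not embed in $F(\psi)$ at all once $m\geq 2$. Take $n=0$, $\psi=\langle 1,c_1,\ldots,c_m\rangle$ anisotropic with $\nd_F(\psi)=2^m$. Then $F(\psi)=F(z_1,\ldots,z_m)\bigl(\sqrt{c_1z_1^2+\cdots+c_mz_m^2}\bigr)$ is a degree-$2$ purely inseparable extension of $F(z_1,\ldots,z_m)$, whereas the compositum $L\cdot F(z_1,\ldots,z_m)$ has degree $2^m$; for $m\geq 2$ one therefore has $L\not\subseteq F(\psi)$, so no stably birational version of your reduction can hold either. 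What does hold, and what the ``if'' direction of the theorem really uses, is the weaker fact that $L\otimes_F F(\psi)$ has nilpotents (it is a nonreduced local ring since $L$ and $F(\psi)$ are not linearly disjoint over $F$), which already produces zero-divisors in $D\otimes_F F(\psi)$ whenever $D\supseteq L$; but this does not give a change of base to an $L$-quadric, so Part~(1) cannot simply be applied over $L$.

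You do acknowledge in your closing paragraph that the approach in~\cite{ta} works directly at the level of the coordinate ring, tensoring with $D$ and analysing zero-divisors, and that this handles the singular directions without the reduction you sketched. That hedge is correct, but it means the ``proof plan'' as written has a gap precisely where it would need to be filled in, and the birational claim underpinning the reduction is false. Your sketch for Part~(1) (Severi--Brauer interpretation of $C_0(\psi)$ and generic splitting) is in the right spirit, though the details of the identification of the quadric with the relevant twisted Severi--Brauer variety in characteristic $2$ and with nontrivial Arf invariant are themselves nontrivial and are not supplied.
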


We finish this section with some results needed in the proofs.

\begin{prop} \label{p1}
(\cite[Prop.~3.2]{l1}) Let $\phi=a[1,x] \perp \left< 1,b,c\right>$ be an anisotropic $F$-quadratic form. Then $\phi$ is a Pfister neighbour if and only if the algebra $[x,a)\otimes_FF(\sqrt{b},\sqrt{c})$ is split.
\end{prop}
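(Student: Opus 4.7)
The plan is to reformulate the Pfister-neighbour property of $\phi$ as the existence of a $3$-fold quadratic Pfister form of a specific shape dominating $\alpha\phi$, to translate this into a vanishing condition on a Clifford invariant in $Br(F)$, and to identify that condition with the splitting of $[x,a)$ over $K:=F(\sqrt{b},\sqrt{c})$.

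First I would use Proposition \ref{nsc} to describe the possible Pfister forms $\pi\in P_3 F$ dominating $\alpha\phi$ for some $\alpha\in F^*$. A dimension count (with $\dim\pi=8$ and the quasilinear part of $\alpha\phi$ of dimension $3$) forces the nonsingular term ``$\rho$'' of Proposition \ref{nsc} to have dimension $0$, so that $\alpha\phi\prec\pi$ is equivalent to
$$\pi\cong \alpha a[1,x]\perp[\alpha,d_1]\perp[\alpha b,d_2]\perp[\alpha c,d_3]$$
for some $d_1,d_2,d_3\in F$. Rescaling by $\alpha^{-1}$ (which stays inside $GP_3F$), $\phi$ is a Pfister neighbour if and only if there exist $d_1,d_2,d_3\in F$ such that
$$\pi':=a[1,x]\perp[1,d_1]\perp[b,d_2]\perp[c,d_3]$$
is an anisotropic element of $GP_3F$.

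Next I would compute the invariants of $\pi'$. Its Arf invariant is $x+d_1+bd_2+cd_3\in F/\wp(F)$, and I can normalise it to $0$ by taking $d_1=x+bd_2+cd_3$. Using the identities $[1,s]\perp[1,t]\sim[1,s+t]$ in $W_qF$ and $[r,s]\cong r[1,rs]$ for $r\in F^*$, I rewrite $\pi'$ Witt-equivalently as a sum of three $2$-fold quadratic Pfister forms,
$$\pi'\;\sim\;\langle\langle a\rangle\rangle_b\otimes[1,x]\;\perp\;\langle\langle b\rangle\rangle_b\otimes[1,bd_2]\;\perp\;\langle\langle c\rangle\rangle_b\otimes[1,cd_3],$$
from which $\pi'\in I_q^3F$ is equivalent to the vanishing of the sum of Clifford invariants:
$$[x,a)+[bd_2,b)+[cd_3,c)=0\quad\text{in }Br(F).\qquad(\star)$$
By the Arason--Pfister theorem in characteristic $2$, an anisotropic $8$-dimensional form in $I_q^3F$ lies in $GP_3F$, so the Pfister-neighbour property reduces to the existence of $(d_2,d_3)\in F^2$ solving $(\star)$ (I would handle the anisotropy of the resulting $\pi'$ by an elementary argument exploiting that $a[1,x]$ and $\phi$ are anisotropic, which rules out $\pi'$ being hyperbolic for the relevant solutions).

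For the equivalence with the splitting condition, the direction $(\Rightarrow)$ is immediate by tensoring $(\star)$ with $K$, since each of $[bd_2,b)_K$ and $[cd_3,c)_K$ vanishes (as $b$ and $c$ become squares in $K$). For $(\Leftarrow)$, I would use that in characteristic $2$ every quaternion algebra split by the inseparable extension $F(\sqrt{b})/F$ is isomorphic to some $[y,b)$ (it contains the inseparable quadratic subfield $F[j]$ with $j^2=b$), and analogously for $c$. The hypothesis $[x,a)_K=0$, together with a descent along the tower $F\subset F(\sqrt{b})\subset K$, then yields a decomposition $[x,a)=[y_1,b)+[y_2,c)$ in $Br(F)$ for some $y_1,y_2\in F^*$; setting $d_2=y_1/b$ and $d_3=y_2/c$ solves $(\star)$. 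I expect the main obstacle to be this last descent step---the characteristic-$2$ analogue of Albert's common-slot theorem for biquadratic splittings of quaternions---which I would prove by carefully analysing the natural scalar extension $Br(F)\to Br(F(\sqrt{b}))$ applied to the quaternion $[x,a)$.
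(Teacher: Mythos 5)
The paper does not prove Proposition \ref{p1} at all; it is imported as a citation of \cite[Prop.~3.2]{l1}, so there is no internal argument against which to compare yours. On its own terms, your proof is essentially correct. The reduction via Proposition \ref{nsc} and a dimension count (with $\dim\pi=8$ forced since $5=\dim\phi\leq\dim\pi<10$ and $\rho=0$) to the existence of $d_1,d_2,d_3$ with $\pi'=a[1,x]\perp[1,d_1]\perp[b,d_2]\perp[c,d_3]$ anisotropic in $GP_3F$ is right; the Arf normalisation and the Witt-rewriting of $\pi'$ as $\langle\langle a\rangle\rangle_b\otimes[1,x]\perp\langle\langle b\rangle\rangle_b\otimes[1,bd_2]\perp\langle\langle c\rangle\rangle_b\otimes[1,cd_3]$ check out; the passage from $(\star)$ to $\pi'\in I_q^3F$ uses the injectivity of $e_2\colon I_q^2F/I_q^3F\to{\rm Br}(F)$, which is available in characteristic $2$; and the anisotropy of $\pi'$ in the converse direction indeed follows from $\phi\prec\pi'$ together with the Arason--Pfister Hauptsatz, since an isotropic $\pi'\in I_q^3F$ of dimension $8$ would be hyperbolic and would force $\phi$ to be isotropic.

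The one place you explicitly leave open, the descent giving $[x,a)=[y_1,b)+[y_2,c)$ in ${\rm Br}(F)$ from the hypothesis that $[x,a)$ is split by $K=F(\sqrt b,\sqrt c)$, is genuinely the crux, but your outline is sound and can be completed in a few lines. Put $E=F(\sqrt b)$. If $[x,a)_E$ is already split, take $y_2=0$ and $[x,a)\cong[y_1,b)$. Otherwise $Q:=[x,a)$ remains division over $E$ and is split by the quadratic extension $E(\sqrt c)$ (proper, since $\langle 1,b,c\rangle$ is anisotropic), so $E(\sqrt c)$ embeds in $Q_E$ and $Q_E\cong[z,c)$ for some $z=z_0+z_1\sqrt b\in E$. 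Since $z+(z_0+bz_1^2)=z_1\sqrt b+bz_1^2=\wp(z_1\sqrt b)\in\wp(E)$, one may replace $z$ by $y_2:=z_0+bz_1^2\in F$. Now $Q\otimes_F[y_2,c)$ is split by $E$, and a quadratic extension cannot split a division algebra of degree $4$, so this biquaternion class is Brauer-equivalent to a quaternion algebra split by $F(\sqrt b)$, hence equal to some $[y_1,b)$. This gives $[x,a)=[y_1,b)+[y_2,c)$ as required, and with this inserted your proof is complete.
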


\begin{prop} \label{p2}
Let $\phi=a[1,x] \perp \left< 1,c_1,\cdots, c_s\right>$ be an anisotropic $F$-quadratic form. Let $K/F$ be a field extension such that $i_W(\phi_K)=1$. Then  $[x,a)\otimes_FK(\sqrt{c_1}, \cdots, \sqrt{c_s})$ is split.
\end{prop}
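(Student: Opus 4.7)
The plan is to reduce splitting of $[x,a)\otimes_FL$, where $L:=K(\sqrt{c_1},\ldots,\sqrt{c_s})$, to the isotropy of the $2$-fold quadratic Pfister form $\pi:=[1,x]\perp a[1,x]$ over $L$. A direct computation shows that $\pi$ is the norm form of $[x,a)$, so $[x,a)_L$ is split if and only if $\pi_L$ is hyperbolic, which for Pfister forms is equivalent to $\pi_L$ being isotropic. Now if $1\in D_L(a[1,x])$, say $a[1,x](w_0,z_0)=1$, then $(1,0,w_0,z_0)$ is isotropic for $\pi_L$ since $1+1=0$ in characteristic~$2$. Hence it suffices to show that $a[1,x]_L$ represents~$1$.

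To exploit the hypothesis $i_W(\phi_K)=1$, note that since $\phi$ has type $(1,s+1)$, the decomposition $\phi_K\cong T\perp[0,0]\perp i_d\times\langle 0\rangle$ (with $T=(\phi_K)_{an}$ and $i_d=i_d(\phi_K)$) forces $T$ to be totally singular of dimension $s+1-i_d$ by comparison of types. Extending to $L$ gives one description of $\phi_L$; on the other hand, since each $c_i$ becomes a square in $L$, one has $\langle 1,c_1,\ldots,c_s\rangle_L\cong\langle 1\rangle\perp s\times\langle 0\rangle$ and hence also $\phi_L\cong a[1,x]_L\perp\langle 1\rangle\perp s\times\langle 0\rangle$. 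A direct inspection of the polar form shows that $a[1,x]_L\perp\langle 1\rangle$ is nondefective with quasilinear part $\langle 1\rangle$; combining this with the uniqueness of the nondefective part and the quasilinear part of $\phi_L$ yields $(T_L)_{an}\cong\langle 1\rangle$ and the isometry
\[ \langle 1\rangle\perp[0,0]\;\cong\;a[1,x]_L\perp\langle 1\rangle. \]

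The left-hand side is isotropic because of the $[0,0]$ summand, so the right-hand side is isotropic. A nontrivial isotropic vector $(u,v,w)$ of $a[1,x]_L\perp\langle 1\rangle$ satisfies $a(u^2+uv+xv^2)=w^2$. If $w=0$ then $a[1,x]_L$ would itself be isotropic, hence hyperbolic, and in particular represent~$1$; while if $w\neq 0$, dividing by $w^2$ gives $a[1,x]_L(u/w,v/w)=1$. Either way $1\in D_L(a[1,x])$, as desired. The main obstacle is the careful handling of the two decompositions of $\phi_L$: one must verify that $a[1,x]_L\perp\langle 1\rangle$ is always nondefective so that uniqueness of the nondefective part of $\phi_L$ (and not merely Witt equivalence of truncated pieces) can be invoked to extract the key isometry above.
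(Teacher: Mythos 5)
Your proof is correct, and it departs from the paper's in the final step. Both arguments first produce, from $i_W(\phi_K)=1$ and the observation that all the $c_i$ become squares in $L$, the key isometry $a[1,x]_L\perp\langle 1\rangle\cong[0,0]\perp\langle 1\rangle$ (the paper writes $\phi_K\cong[0,0]\perp\langle 1,c_1,\ldots,c_s\rangle_K$, extends to $L$, and applies the cancellation Proposition~\ref{p3}(2) to strip off the $s\times\langle 0\rangle$; you instead compare nondefective parts and quasilinear parts directly, which is a slightly more verbose but equally valid route to the same isometry). The genuine difference is the endgame: the paper takes even Clifford algebras and cites \cite[Lem. 2]{ta} to read off that $[x,a)\otimes_F L$ is split, whereas you avoid Clifford algebras entirely by identifying $[1,x]\perp a[1,x]$ as the norm form of $[x,a)$ and then explicitly producing an isotropic vector from the representation $1\in D_L(a[1,x])$ that the isometry forces. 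Your version is more elementary and self-contained (it replaces a citation with a two-line computation), at the modest cost of invoking the standard fact that a quaternion algebra splits iff its norm form is isotropic. One small presentational point: when you write ``Extending to $L$ gives one description of $\phi_L$,'' you should make explicit that $T_L$ may acquire defect over $L$ and that $(\phi_L)_{nd}\cong(T_L)_{an}\perp[0,0]$; you implicitly handle this when extracting $(T_L)_{an}\cong\langle 1\rangle$, but it is worth spelling out since $T$ anisotropic over $K$ does not give $T_L$ anisotropic.
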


\prf Let $L=K(\sqrt{c_1}, \cdots, \sqrt{c_s})$. Since $i_W(\phi_K)=1$ we have $\phi_K\cong [0,0]\perp \left< 1,c_1,\cdots, c_s\right>_K$. Then $a[1,x]_L\perp \left<1\right>_L \perp s\times \left<0\right> \cong [0,0] \perp \left<1\right>_L \perp s\times \left<0\right>$. By Proposition \ref{p3}(2), we deduce that 
$a[1,x]_L\perp \left<1\right>_L \cong [0,0] \perp \left<1\right>_L$, and thus, taking the even Clifford algebra \cite[Lem. 2]{ta}, we conclude that $[x,a)\otimes_FL$ is split.\qed

\vskip1mm

Quadratic forms of dimension $2^n+1$ satisfy many properties related to the isotropy problem over the function fields of quadrics. We recall two of these properties.

\begin{prop} \label{c1}
(\cite[Cor.~5.11]{l2})
Let $\phi$ and $\psi$ be anisotropic  $F$-quadratic forms of type $(1,s)$ and $(1,s')$ respectively. Suppose that $\dim \phi=2^n+1$ and $\dim \psi>2^n+1$ ($n\geq 1$). Then $\phi_{F(\psi)}$ is anisotropic.
\end{prop}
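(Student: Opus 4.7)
The plan is to argue by contradiction: assume $\phi_{F(\psi)}$ is isotropic and split according to whether $\phi$ is a Pfister neighbour. Suppose first that $\phi$ is a neighbour of some $\pi \in GP_{n+1}(F)$. Then $\pi$ is nonsingular of dimension $2^{n+1}$, and isotropy of $\phi_{F(\psi)}$ makes $\pi_{F(\psi)}$ hyperbolic. Theorem~\ref{trd} then gives $\psi \prec_w \pi$, say $\alpha \psi \prec \pi$ for some $\alpha \in F^*$. Since $\pi$ is nonsingular while $\ql(\alpha\psi)$ has dimension $\dim\psi - 2 \geq 2^n$, Proposition~\ref{nsc} forces every scalar of the quasilinear part of $\alpha\psi$ to extend to a binary nonsingular summand of $\pi$, giving $\dim\pi \geq 2\dim\psi - 2$. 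Together with $\dim\pi = 2^{n+1}$ this yields $\dim\psi \leq 2^n + 1$, contradicting the hypothesis.

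In the non-Pfister-neighbour case, put $\phi$ in the form $\phi \cong a[1,x] \perp \left<1, c_1, \ldots, c_{2^n-2}\right>$ after suitable scaling. By the natural extension of Proposition~\ref{p1} to a quasilinear part of dimension $2^n-1$, the non-neighbour hypothesis is equivalent to the quaternion algebra $[x,a)$ remaining non-split over $E := F(\sqrt{c_1}, \ldots, \sqrt{c_{2^n-2}})$. Assuming temporarily $i_W(\phi_{F(\psi)}) = 1$, Proposition~\ref{p2} with $K = F(\psi)$ forces $[x,a)$ to split over $E(\psi_E) = F(\psi) \cdot E$. Once one verifies that $\psi_E$ remains anisotropic, applying Theorem~\ref{trind}(2) to the quaternion division algebra $D := [x,a)_E$ relative to $E(\psi_E)/E$ demands that $D$ contain a prescribed $E$-subalgebra built from the normal form of $\psi_E$. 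Since $D$ has $E$-dimension only $4$, this is compatible only with $\psi_E$ having very constrained type, and tracking these constraints forces $\dim\psi \leq 2^n + 1$, again contradicting the hypothesis.

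The main obstacle is the remaining possibility $i_W(\phi_{F(\psi)}) \geq 2$, where Proposition~\ref{p2} does not apply directly. For this I would climb the generic splitting tower of $\phi$ to an intermediate field over which the Witt index has just jumped to $1$ and apply the analysis above there; the delicate point is controlling that $\psi$ remains anisotropic and of type $(1, s')$ along the tower, which is where the finer analysis of $(1, s)$-type forms developed in~\cite{l2} becomes essential.
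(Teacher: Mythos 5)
The paper does not actually prove this proposition---it is quoted verbatim from \cite[Cor.~5.11]{l2}---so there is no in-paper argument to compare against. Judged on its own merits, your sketch is correct in the Pfister-neighbour branch but has genuine gaps in the non-neighbour branch.

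The Pfister-neighbour case is fine: $\phi\prec_w\pi$ forces $\pi\in GP_{n+1}F$ of dimension $2^{n+1}$ (anisotropic, since $\phi$ is), isotropy of $\phi_{F(\psi)}$ makes $\pi_{F(\psi)}$ hyperbolic, Theorem~\ref{trd} gives $\alpha\psi\prec\pi$, and Proposition~\ref{nsc} applied with $\pi$ nonsingular forces every quasilinear entry of $\alpha\psi$ to complete to a binary summand, so $2^{n+1}=\dim\pi\geq 2\dim\psi-2$. That yields $\dim\psi\leq 2^n+1$, the desired contradiction.

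The non-neighbour branch has three problems. \emph{First}, the ``natural extension of Proposition~\ref{p1}'' you invoke is not established and the direction you need is doubtful for $n\geq 3$. Proposition~\ref{p1} is proved in \cite{l1} only for type $(1,3)$, where $\ql(\phi)$ automatically has norm degree exactly $4=2^n$. For $\dim\phi=2^n+1$ with $n\geq 3$, the quasilinear part $\left<1,c_1,\ldots,c_{2^n-2}\right>$ may have norm degree much larger than $2^n$, in which case $\phi$ cannot be a neighbour of any $(n+1)$-fold Pfister form no matter what $[x,a)$ does over $E=F(\sqrt{c_1},\ldots,\sqrt{c_{2^n-2}})$. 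Thus ``not a Pfister neighbour'' does not furnish you with a division quaternion algebra over $E$, and the whole index-reduction machinery has nothing to bite on. \emph{Second}, even granting that $D=[x,a)_E$ is division, your contradiction is asserted rather than derived. Theorem~\ref{trind}(2) applied over $E$ to the anisotropic part of $\psi_E$ (after you rule out $i_W(\psi_E)\geq 1$) tells you that $D$ contains $[b_1,a_1)\otimes E(\sqrt{c_1},\ldots,\sqrt{c_{d-1}})$, where $(\psi_E)_{\mathrm{an}}$ has type $(1,d)$; comparing $E$-dimensions forces $d\leq 1$, i.e.\ $\ql(\psi)$ has anisotropic dimension at most $1$ over $E$. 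That is a genuine constraint, but it only says $\nd_F(\ql(\psi))\leq[E:F]=\nd_F(\ql(\phi))$, which can be as large as $2^{2^n-2}$; it gives no bound of the form $\dim\psi\leq 2^n+1$. \emph{Third}, your case split on the Witt index is miscalibrated. Since $\phi$ has type $(1,s)$, one always has $i_W(\phi_{F(\psi)})\leq 1$, so the contingency $i_W\geq 2$ you worry about is vacuous. The case your argument actually leaves untouched is $i_W(\phi_{F(\psi)})=0$ with $i_d(\phi_{F(\psi)})\geq 1$, i.e.\ isotropy caused by $\ql(\phi)$ becoming isotropic over $F(\psi)$ while no hyperbolic plane splits off. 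There Proposition~\ref{p2} gives nothing, and climbing a generic splitting tower does not address a defect-index jump. Handling this case requires a separate argument about anisotropy of totally singular forms of dimension $\leq 2^n$ over $F(\psi)$ when $\dim\psi>2^n$---exactly the kind of input developed in \cite{l2} and \cite{hl1}---and your sketch does not supply it.
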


\begin{prop} \label{c2}
 (\cite[Th. 1.3]{hl1}) Let $\phi$ be an anisotropic $F$-quadratic form of dimension $2^n+1$ and $\psi$ an anisotropic totally singular $F$-quadratic form of dimension $>2^n$. Then $\phi$ is anisotropic over $F(\psi)$.
\end{prop}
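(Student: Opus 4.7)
The plan is to argue by contradiction: suppose $\phi_{F(\psi)}$ is isotropic, and derive a contradiction from the odd dimension $2^n+1$ of $\phi$ combined with the totally singular nature of $\psi$ of large dimension.

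First, I would exploit that $\psi$ is anisotropic totally singular of dimension $>2^n$ to show $\nd_F(\psi) \geq 2^{n+1}$, since the norm degree is a power of $2$ bounded below by the dimension for anisotropic totally singular forms. Writing $\psi \cong \langle 1, c_1, \ldots, c_{t-1}\rangle$ after scaling, the purely inseparable extension $L = F(\sqrt{c_1}, \ldots, \sqrt{c_{t-1}})$ satisfies $[L:F] \geq 2^{n+1}$.

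Second, assuming $\phi$ is not totally singular, I would use Theorem~\ref{trind}(2) to convert isotropy of $\phi_{F(\psi)}$ into an algebraic condition. The key intermediate claim is that the first Witt index satisfies $i_W(\phi_{F(\psi)})=1$: higher Witt indices would, after cancellation (Proposition~\ref{p3}) and the structural description of domination (Proposition~\ref{nsc}), combine with the subform theorem (Theorem~\ref{trd}) to produce a subform relation between $\psi$ and $\phi$ incompatible with $\dim\psi > 2^n = \dim\phi - 1$. With $i_W(\phi_{F(\psi)})=1$, Proposition~\ref{p2} forces a quaternion algebra $[x,a)$ associated with a nonsingular summand of $\phi$ to split over $F(\psi)(\sqrt{d_1}, \ldots, \sqrt{d_k})$, where $\langle 1, d_1, \ldots, d_k\rangle$ is the quasilinear part of $\phi$. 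Applying Theorem~\ref{trind}(2) to this quaternion algebra over $M=F(\sqrt{d_1}, \ldots, \sqrt{d_k})$ and the totally singular form $\psi_M$, the splitting forces the degree-$2$ quaternion algebra to contain a purely inseparable sub-algebra isomorphic to $M(\sqrt{c_1},\ldots,\sqrt{c_{t-1}})$. A degree count using $[L:F] \geq 2^{n+1}$, the bound $[M:F] \leq 2^k$, and the dimension balance $\dim\phi = 2^n+1$ then yields the contradiction.

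The main obstacle, and the most delicate step, is establishing $i_W(\phi_{F(\psi)})=1$ in characteristic~$2$ for possibly singular forms: higher Witt indices would require more than one hyperbolic plane (or defect vector) to split off, and excluding them uniformly across all types $(r,s)$ of $\phi$ requires combining the structural Propositions~\ref{p3} and \ref{nsc} with a careful analysis of how a jump in isotropy must be realised by an actual subform of $\phi$. The totally singular case of $\phi$ is handled by a parallel norm-form argument that does not use a quaternion algebra: isotropy of $\phi$ over $F(\psi)$ would relate $\nd_F(\phi)$ and $\nd_F(\psi)$ in a way forcing $\dim\phi$ to be a power of $2$, contradicting $\dim\phi = 2^n+1$.
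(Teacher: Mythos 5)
The paper does not prove Proposition~\ref{c2}: it is quoted verbatim from \cite[Th.~1.3]{hl1} and used as a black box, so there is no internal proof to compare your attempt against. Evaluating your sketch on its own terms, there are several gaps, of which the most serious is that the final degree count does not close.

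Concretely, take $\phi$ of type $(1,s)$, say $\phi\cong a[1,x]\perp\langle 1,d_1,\dots,d_k\rangle$ with $k=s-1$ and $\dim\phi=k+3=2^n+1$, so $k=2^n-2$. Put $M=F(\sqrt{d_1},\dots,\sqrt{d_k})$ and $L=F(\sqrt{c_1},\dots,\sqrt{c_{t-1}})$. Your containment, coming from Theorem~\ref{trind}(2) applied to the quaternion $[x,a)_M$, gives $[ML:M]\leq 2$, hence $[ML:F]\leq 2\cdot[M:F]\leq 2^{k+1}=2^{2^n-1}$; on the other side $[ML:F]\geq[L:F]\geq 2^{n+1}$. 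Since $n+1\leq 2^n-1$ for every $n\geq 2$, these two bounds are perfectly compatible and there is no contradiction. What would actually finish the argument is the \emph{local} estimate $\nd_M(\psi_M)=[ML:M]\geq\dim(\psi_M)_{\mathrm{an}}$, which does yield a contradiction provided $\psi_M$ stays anisotropic (or at least has anisotropic part of dimension $>2$); but this anisotropy is precisely what has to be established and cannot be read off from the crude inequalities $[L:F]\geq 2^{n+1}$, $[M:F]\leq 2^k$. If $\psi_M$ becomes highly defective, Theorem~\ref{trind}(2) must be applied to $(\psi_M)_{\mathrm{an}}$, and when that anisotropic part has dimension $\leq 1$ one only learns that $[x,a)_M$ is split, which is not an immediate contradiction either.

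There are further unaddressed points. Proposition~\ref{p2} applies only to forms of type $(1,s)$, so for $\phi$ of type $(r,s)$ with $r\geq 2$ (which occurs for every $n\geq 2$) you need a genuinely different device; the proposal never discusses the reduction or gives an alternative. The claim $i_W(\phi_{F(\psi)})=1$ presupposes that $\ql(\phi)_{F(\psi)}$ stays anisotropic, which is itself a non-trivial assertion about totally singular forms of dimension $\leq 2^n$ over $F(\psi)$ and is not justified by Propositions~\ref{p3} and~\ref{nsc} alone. Finally, the totally singular case of $\phi$ is dispatched with a one-line appeal to an unstated ``norm-form argument''; note in particular that $\dim\phi=2^n+1$ being a non-power of $2$ is not by itself an obstruction, so this case needs an actual argument as well.
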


We will need some facts about quadratic forms over valued fields. Let $K$ be a field of characteristic $2$ which is complete for a discrete valuation $\nu$, $A$ the associated valuation ring, $\pi$ a uniformiser   and $\overline{K}$ the residue field. Let $\phi=(V,q)$ be a $K$-quadratic form. The first and the second residue forms of $\phi$ are defined as follows (we refer to \cite[p.1341]{b1} for more details): For any integer $i\geq 0$, let $M_i=\{ v\in V\mid q(v)\in \pi^i A\}$. $M_i$ is  an $A$-module and  clearly $M_0\supset M_1 \supset M_2$. Let us consider the $\overline{K}$-vector spaces $V_0=M_0/M_1$ and $V_1=M_1/M_2$, and define the $\overline{K}$-quadratic forms $\phi_0=(V_0,q_0)$ and 
$\phi_1=(V_1,q_1)$ 
by $q_i(v+M_{i+1})=\overline{\pi^{-i}q(v)}$ for $i=0, 1$. The forms $\phi_0$ and $\phi_1$ are anisotropic and are called the first and the second residue forms of $\phi$, respectively. These forms may be singular and  they satisfy $\dim \phi=\dim \phi_0 +\dim \phi_1$. 

We recall the Schwarz inequality \cite[p.342]{mmw} which asserts that for any two vectors $x,y\in V$, we have:

\[\nu(b_{q}(x,y)^2) \geq \nu (q(x))+ \nu (q(y)),\]where $b_{q}$ is the polar form of $\phi$.

\begin{exa}\label{example1}
 We keep the same notations and hypotheses as in the previous paragraph. Let $u, v\in K$ be units and $n\in \Z$ be such that the binary quadratic form $[u,v\cdot \pi^n]$ is anisotropic over $K$. Then the Schwarz inequality implies that $n\leq 0$. If moreover $n <0$ and even (\resp $n <0$ and odd), then the first and the second residue forms of $[u,v\cdot \pi^n]$ are $\left< \overline{u}, \overline{v}\right>$ and the zero form (\resp $\left< \overline{u}\right>$ and $\left< \overline{v}\right>$). If $n=0$ then the first and  second residue forms are $[\overline{u}, \overline{v}]$ and the zero form.
\end{exa}

\section{Proof of Proposition \ref{ppar}}\label{ppart1}

Let $\phi$ be an anisotropic $F$-quadratic form  of dimension $5$ and type $(1,3)$ that is not a Pfister neighbour. Let $\psi$ be an anisotropic 
$F$-quadratic form of type $(r,s)$. 
Up to a scalar, we may write $\phi=x[1,a]\perp \left<1,b,c\right>$. Let $L=F(\sqrt{b}, \sqrt{c})$. Since $\phi$ is not a Pfister neighbour, the algebra $[a,x)\otimes_F L$ is division. If  $r=0$ and $s=1$ then clearly $\phi_{F(\psi)}$ is anisotropic.

(i) Suppose that ($r=0$ and $s\geq 5$) or ($r=1$ and $s\geq 4$), then $\phi_{F(\psi)}$ is anisotropic by Propositions \ref{c2} and \ref{c1}, respectively.

(ii) Suppose that $\psi=u[1,k]\perp v[1,l]$ and $\triangle(\psi)\neq 0$. If $\phi_{F(\psi)}$ is isotropic, then $i_W(\phi_{F(\psi)})=1$ because $\left<1,b,c\right>_{F(\psi)}$ is anisotropic. It follows from Proposition \ref{p2} that $[a,x)\otimes_F L(\psi_L)$ is not a division algebra. This implies that $\psi$ is anisotropic over $L$. Moreover, since $\triangle(\psi_L)\neq 0$, the algebra $[a,x)\otimes_FL$ contains a sub-algebra isomorphic to $C_0(\psi_L)$ by Theorem \ref{trind}(1). By comparing the dimensions of the two algebras, we see that this is not possible.

(iii) Suppose that $r=2$ and $s\geq 1$. Let $\psi'$ be an $F$-quadratic form dominated by $\psi$ of type $(2,1)$. If $\phi_{F(\psi)}$ is isotropic, then $\phi_{F(\psi')}$ is also isotropic because $F(\psi')(\psi)/F(\psi')$ is purely transcendental. Hence $\phi_{F(\psi')}$ is isotropic, and thus $\psi'$ is not a Pfister neighbour. This implies that $\psi'\cong R \perp \ql(\psi')$ for some nonsingular form $R$ such that $\triangle(R)\neq 0$. Now since $F(R)(\psi')/F(R)$ is purely transcendental, we conclude that $\phi_{F(R)}$ is isotropic, which is not possible by the case (ii).

(iv) Suppose that $r\geq 3$. Let $\rho$ be an $F$-quadratic form dominated by $\psi$ of type $(2,1)$. If $\phi_{F(\psi)}$ is isotropic then $\phi_{F(\rho)}$ is isotropic, which is not possible by the case (iii).

\section{Proof of statements $(1)$, $(2)$ and $(3)$ of Theorem \ref{tp}}

Let $\phi$ be an anisotropic $F$-quadratic form of dimension $5$ and type $(1,3)$ that is not a Pfister neighbour.
First let $\psi$ be an anisotropic form similar to a $2$-fold Pfister form $\pi$ over $F$ such that $\phi_{F(\psi)}$ is isotropic, and let $\psi'$ be a Pfister neighbour of $\pi$ of dimension $3$. Then $\phi_{F(\psi)}$ is isotropic if and only if $\phi_{F(\psi')}$ is isotropic, as
the field extensions 
$F(\psi')(\psi)/F(\psi')$  and $F(\psi)(\psi')/F(\psi)$  are transcendental. As $\psi'$ must be of type $(1,1)$, we have reduced  Case $(2)$ to Case $(1)$.

 Now let $\psi$ be an anisotropic $F$-quadratic form  of type $(1,s)$ with $1\leq s \leq 3$ and assume that  $\phi_{F(\psi)}$ is isotropic.
Up to a scalar we may write $\psi\cong R_2 \perp \ql(\psi)$, where $R_2$ is nonsingular of dimension $2$ and $\ql(\psi)$ is one of the following forms $\left<1\right>$, $\left< 1,a\right>$ or $\left< 1, a,b\right>$  as $s=1$, $2$ or $3$, accordingly. Similarly, up to a scalar, we may write $\phi\cong R_1 \perp \left<1,u,v\right>$. Let $Q_1$ and $Q_2$ be the quaternion $F$-algebras satisfying $C(R_i)\sim Q_i \in {\rm Br}(F)$ for $i=1, 2$. Since $\phi_{F(\psi)}$ is isotropic, the algebra $Q_1\otimes_FF(\psi)(\sqrt{u},\sqrt{v})$ is split (Proposition \ref{p2}).

\noindent{\bf Claim 1}: $\ql(\psi)$ is similar to a subform of $\ql(\phi)$. 

This is  trivial if  $s=1$. Suppose that $s\geq 2$. We have $F(\psi)(\sqrt{u},\sqrt{v})=F(\sqrt{u},\sqrt{v})(\psi)$ as the nondefective part of $\psi_{F(\sqrt{u},\sqrt{v})}$ is neither of type $(0,1)$ nor isometric to $\H$. 

Since $Q_1\otimes_FF(\sqrt{u},\sqrt{v})(\psi)$ is split, we conclude by Theorem \ref{trind}(2) that $\psi$ is necessarily isotropic over $F(\sqrt{u}, \sqrt{v})$. The case $i_W(\psi_{F(\sqrt{u}, \sqrt{v})})>0$ is excluded otherwise $F(\sqrt{u},\sqrt{v})(\psi)$ would be purely transcendental over $F(\sqrt{u},\sqrt{v})$ and thus $Q_1\otimes_FF(\sqrt{u},\sqrt{v})$ would be split. Hence $i_d(\psi_{F(\sqrt{u}, \sqrt{v})})>0$. Moreover, by reasons of dimension, Theorem \ref{trind}(2) implies that $\dim (\ql(\psi)_{F(\sqrt{u}, \sqrt{v})})_{an}=1$. Hence when $s=2$ (\resp $s=3$) this implies that $a\in F^2(u,v)$ (\resp $a,b\in F^2(u,v)$). Consequently, $\left<\left<u,v\right>\right>$ is isotropic over $F(\left<1,a\right>)$ or $F(\left<1,a,b\right>)$  as $s=2$ or $s=3$, accordingly. In particular, $\left< 1,u,v\right>$ is isotropic over $F(\left<1,a\right>)$ or $F(\left<1,a,b\right>)$  as $s=2$ or $s=3$, accordingly. Hence $\ql(\psi)$ is similar to a subform of $\left<1,u,v\right>$ (the case $s=2$ is Lemma \ref{easy} and the case $s=3$ is a consequence of \cite[Thm.~1.2]{lr}). Hence, up to a scalar, we may suppose that $\ql(\psi)$ is a subform of $\left<1,u,v\right>$. 

By Claim 1  the nondefective part of $\psi_{F(\sqrt{u}, \sqrt{v})}$ is isometric to $(R_2\perp \left<1\right>)_{F(\sqrt{u}, \sqrt{v})}$. 
Since $Q_1\otimes_F F(\sqrt{u}, \sqrt{v})(\psi)$ is split, it follows that $Q_1\otimes_F F(\sqrt{u}, \sqrt{v})(R_2\perp \left<1\right>)$ is also split. Consequently, Theorem \ref{trind}(2) implies that $Q_1\otimes_F F(\sqrt{u}, \sqrt{v})$ is isomorphic to $Q_2\otimes_F F(\sqrt{u}, \sqrt{v})$. This implies that $Q_1\otimes_FQ_2$ is split over $F(\sqrt{u}, \sqrt{v})$. Then there exist $k,l\in F$ such that $Q_1\otimes_FQ_2\sim [k,u)+[l,v) \in {\rm Br}(F)$. Using the Clifford invariant, we get  
$$R_1\perp R_2 \perp u[1,k]\perp v[1,l]\perp [1,r_1+r_2+u+v] \in I^3_qF\,,$$where $\triangle(R_i)=r_i+\wp(F)$ for $i=1,2$. Hence, by \cite[Prop. 6.4]{l3}, there exists $\pi \in GP_3F$ such that
\begin{equation}
R_1\perp R_2 \perp \rho \sim \pi,
\label{e0p}
\end{equation}where $\rho=u[1,k]\perp v[1,l]\perp [1,r_1+r_2+u+v]$ is a nonsingular completion of $\left< 1,u,v\right>$.\vskip1mm

\noindent{\bf Claim 2}: The form $\pi$ is isotropic over $F(\psi)$. This implies by Theorem \ref{trd} that $\psi\prec_w \pi$.

Since $\psi_{F(\psi)}$ is isotropic, we get $(R_2\perp \left<1,u,v\right>)_{F(\psi)}\cong \H \perp \left<1,u,v\right>)_{F(\psi)}$. By the completion lemma (Proposition \ref{p6}), there exist $r,s,t\in F(\psi)$ such that
$$R_2 \perp \rho\cong \H  \perp u[1,r]\perp v[1,s]\perp [1,t]\,.$$
Hence we get
\begin{equation}
R_1\perp R_2 \perp \rho \sim R_1 \perp u[1,r]\perp v[1,s]\perp [1,t] \sim \pi.
\label{e1p}
\end{equation}

Since $\phi_{F(\psi)}$ is isotropic, we get $(R_1\perp \left<1,u,v\right>)_{F(\psi)}\cong [0,0]\perp \left<1,u,v\right>)_{F(\psi)}$. Again by the completion lemma, there exist $r',s',t'\in F(\psi)$ such that 
\begin{equation}
R_1 \perp u[1,r]\perp v[1,s]\perp [1,t]\cong \H \perp u[1,r']\perp v[1,s']\perp [1,t'].
\label{e2p}
\end{equation}
It follows from (\ref{e1p}) and (\ref{e2p}) that $u[1,r']\perp v[1,s']\perp [1,t'] \sim \pi_{F(\psi)}$, and thus $\pi_{F(\psi)}$ is isotropic. Hence the claim.\vskip1mm

\noindent{\bf Claim 3}: If $s=3$, then $\phi$ is isometric to $\psi$.

Without loss of generality, we may suppose that $\pi \in P_3F$. If $\pi$ is anisotropic then $\psi$ is a Pfister neighbour of $\pi$. Hence $\psi_{F(\pi)}$ is also isotropic and $F(\pi)(\psi)/F(\pi)$ is purely transcendental. Consequently $\phi$ is isotropic over $F(\pi)$, which is not possible by Proposition \ref{ppar}. Hence $\pi$ is isotropic and thus hyperbolic. It follows from (\ref{e0p}) that $R_1 \perp \rho \sim R_2$. Hence $R_1 \perp \rho \perp \left< 1,u,v\right>\sim R_2 \perp \left< 1,u,v\right>$. Consequently, $R_1 \perp \left< 1,u,v\right>\sim R_2 \perp \left< 1,u,v\right>$, which implies that $\phi$ is isometric to $\psi$. 

Conversely, suppose that there exist $R_1, R_2$ nonsingular quadratic forms of dimension $2$, scalars $u,v, \alpha, \beta \in F^*$, a nonsingular completion $\rho$ of $\left<1,u,v\right>$ and a Pfister form $\pi\in P_3F$ such that: $\psi \prec_w \pi$, $\alpha \phi \cong R_1 \perp \left<1,u,v\right>$, $\beta\psi\cong R_2\perp Q$ and $R_1\perp R_2 \perp \rho \sim \pi$ such that $Q$ is a subform of $\left<1,u,v\right>$. Then $(R_1\perp R_2 \perp \rho)_{F(\psi)} \sim 0$. In particular, 
$(R_1\perp \rho \perp \left<1, u,v\right>)_{F(\psi)} \sim (R_1\perp \left<1, u,v\right>)_{F(\psi)} \sim (R_2\perp \left<1,u,v\right>)_{F(\psi)}$. Since 
$(R_2\perp \left<1,u,v\right>)_{F(\psi)}$ is isotropic, we conclude that $\phi_{F(\psi)}$ is isotropic.

\section{Proof of statements $(4)$ and $(5)$ of Theorem \ref{tp}}

Let $\phi$ be an anisotropic $F$-quadratic form of dimension $5$ and type $(1,3)$ that is not a Pfister neighbour and  $\psi$ be an anisotropic $F$-quadratic form  of type $(0,3)$ or $(0,4)$.  Suppose that $\phi_{F(\psi)}$ is isotropic. 
We want to show that there exist  a quadratic form $\phi'$ of type $(1,3)$ and $\pi\in GP_3(F)$ such that $\phi \sim \phi' \perp \pi$ and $\psi$ is weakly dominated by $\phi'$ and $\pi$.
Up to a scalar, we may suppose that $\phi=\alpha[1,x]\perp \left< 1,u,v\right>$ for suitable $\alpha, x, u,v\in F^*$.

\noindent{\bf Case 1.} Suppose that $\psi$ is of type $(0,3)$. Up to a scalar, we may suppose that $\psi=\left< 1,a,b\right>$. We put $\delta=\left< 1,u,v\right>$.

(a) Suppose that $\delta$ is isotropic over $F(\psi)$. Then $\delta$ is similar to $\psi$ by \cite[Thm.~1.2]{lr}. Hence we are done by taking $\phi'=\phi$ and $\pi$ the hyperbolic $3$-fold quadratic Pfister form.

(b) Suppose that $\delta$ is anisotropic over $F(\psi)$.

\noindent{\it Claim.} Up to a scalar, we may suppose that $a$, $b$ or $ab$ is represented by $\delta$.

Since $\delta$ is anisotropic over $F(\psi)$, the isotropy of $\phi_{F(\psi)}$ implies that $i_W(\phi_{F(\psi)})=1$, i.e. $\phi_{F(\psi)}\cong \H \perp \delta_{F(\psi)}$. Moreover, the isotropy of $\delta$ over its own function field implies that $\delta_{F(\delta)}\cong \left<1, u\right>_{F(\delta)}\perp \left< 0\right>$. Hence
$$\phi_{F(\delta)(\psi)}\sim (\alpha[1,x]\perp \left<1,u\right> \perp \left<0\right>)_{F(\delta)(\psi)} \sim (\H\perp \left<1,u\right> \perp \left<0\right>)_{F(\delta)(\psi)}\,.$$Since the forms $(\alpha[1,x]\perp \left<1,u\right>)_{F(\delta)(\psi)}$ and $(\H\perp \left<1,u\right>)_{F(\delta)(\psi)}$ are nondefective, it follows from Proposition \ref{p3} that$$(\alpha[1,x]\perp \left<1,u\right>)_{F(\delta)(\psi)} \sim (\H\perp \left<1,u\right>)_{F(\delta)(\psi)}\,.$$ In particular, the form $(\alpha[1,x]\perp \left<1,u\right>)_{F(\delta)(\psi)}$ is isotropic. 

Note that the form $(\alpha[1,x]\perp \left<1,u\right>)_{F(\delta)}$ is anisotropic, otherwise we would get that the algebra $[x,\alpha)\otimes_F F(\delta)(\sqrt{u})$ is split, and thus $[x,\alpha)\otimes_F F(\sqrt{u}, \sqrt{v})$ would be split, i.e. $\phi$ would be a Pfister neighbour. Hence the Albert form $\gamma:=\alpha[1,x]\perp u[1,t^{-1}]\perp [1,x+t^{-1}]$ is anisotropic over $K(\delta)$, where $K=F((t))$ the field of Laurent series. Hence $D:=[x,\alpha)_K\otimes_K [t^{-1},u)$ is a division algebra over $K(\delta)$. Since $(\alpha[1,x]\perp \left<1,u\right>)_{F(\delta)(\psi)}$ is isotropic, it follows that $\gamma$ is isotropic over $K(\delta)(\psi)$, and thus the index of the algebra $D:=[x,\alpha)_K\otimes_K [t^{-1},u)$ reduces over the extension $K(\delta)(\psi)$. By Theorem \ref{trind}(2), $D_{K(\delta)}$ contains the biquadratic extension $K(\delta)(\sqrt{a}, \sqrt{b})$. This implies that the algebra $D_{K(\delta)}$ is isomorphic to the biquaternion algebra of $[r,a)\otimes [s,b)$ for suitable $r,s\in K(\delta)^*$. Hence, using the Jacobson theorem \cite{ms}, there exists $p\in K(\delta)^*$ such that
\begin{equation}
(\alpha[1,x]\perp u[1,t^{-1}]\perp [1,x+t^{-1}])_{K(\delta)}\cong p(a[1,r] \perp b[1,s] \perp [1,r+s]).
\label{eqiso1}
\end{equation}

We consider the $t$-adic valuation of the field $K(\delta)$. It is clear that, from the left hand side of (\ref{eqiso1}), the first and second residue forms of $\gamma_{K(\delta)}$ are the forms $\alpha[1,x]\perp \left<1, u\right>$ and $\left<1,u\right>$ respectively. We may suppose that $p$ is square free. Moreover $p$ is a unit, otherwise the second residue form from the right hand side of (\ref{eqiso1}) would be of dimension bigger that $2$. Using the Schwarz inequality we deduce that the valuations of $r$, $s$ and $r+s$ are less than or equal to zero (due  to the anisotropy of $\gamma$ over $K(\delta)$, see Example \ref{example1}). Moreover, using Example \ref{example1} and the fact that the first and the second residue forms of the left hand side of (\ref{eqiso1}) are of type $(1,2)$ and $(0,2)$, we conclude that one of the scalars $r,s$ and $r+s$ is a unit and the two other scalars are not units whose valuations are odd. Hence comparing the quasilinear parts of the residue forms, we get that $\left<1,u\right>_{F(\delta)}$ is isometric to one of the following forms: $\overline{p}\left<1,a\right>_{F(\delta)}$, $\overline{p}\left<1,b\right>_{F(\delta)}$ or $\overline{p}\left<a,b\right>_{F(\delta)}$, where $\overline{p}$ is the residue class of $p$. Using the roundness of a quasi-Pfister form (\cite[(8.5), (i)]{hl}), we conclude that 
\[(\star)\;\;\;\;\;\left<1,u\right>_{F(\delta)}\cong \begin{cases}\left<1,a\right>_{F(\delta)}\; \text{or}\\\left<1,b\right>_{F(\delta)}\; \text{or}\\
\left<1,ab\right>_{F(\delta)},\end{cases}\]which implies that one of the following three forms is isotropic over $F(\delta)$: $\left<1,u,a\right>$, $\left<1,u,b\right>$, $\left<1,u,ab\right>$. Using \cite[Thm.~1.2]{lr}, and modulo a scalar, we may therefore suppose that $\delta$ is isometric to one of the three forms: $\left<1,u,a\right>$, $\left<1,u,b\right>$, $\left<1,u,ab\right>$. Hence the claim.

By the claim above, we may suppose that $\psi=\left< 1,u,w\right>$ for suitable $w\in F^*$. As before the isotropy of $\phi_{F(\psi)}$ implies that $[x,\alpha)\otimes F(\sqrt{u}, \sqrt{v}, \sqrt{w})$ is split. Hence there exist suitable scalars $k,l, m\in F^*$ such that $[x, \alpha)$ is Brauer-equivalent to $[k, u)\otimes_F[l,v)\otimes_F[m,w)$. Using the Clifford invariant, we get that$$\alpha[1,x]\perp u[1,k] \perp v[1,l]\perp w[1,m]\perp [1,x+k+l+m]\in I^3_q(F)\,.$$ 
It follows from \cite[Prop. 6.4]{l3} that
\begin{equation}
\alpha[1,x]\perp u[1,k] \perp v[1,l]\perp w[1,m]\perp [1,x+k+l+m] \sim \pi
\label{eqiso2}
\end{equation}
for some form $\pi \in GP_3(F)$. Using the fact that $\phi_{F(\psi)}\sim \left<1,u,v\right>_{F(\psi)}$ with the completion lemma, we deduce from (\ref{eqiso2}) that $(\alpha[1,x]\perp u[1,k] \perp v[1,l]\perp [1,x+k+l+m])_{F(\psi)} \cong \H \perp u[1,k'] \perp v[1,l']\perp [1,m']$ for suitable $k',l',m' \in F(\psi)^*$. Hence $u[1,k'] \perp v[1,l']\perp w[1,m] \perp [1,m'] \cong \pi_{F(\psi)}$. In particular, $\psi_{F(\psi)}$ is dominated by $\pi_{F(\psi)}$. This implies that $\pi_{F(\psi)}$ is isotropic, and thus hyperbolic.  Hence $\psi\prec_w\pi$. Further,  since$$\alpha[1,x]\perp u[1,k] \perp v[1,l]\perp w[1,m]\perp [1,x+k+l+m] \sim \pi\,,$$we deduce that 
$$\alpha[1,x]\perp \left<1,u,v\right> \sim w[1,m]\perp \left<1,u,v\right> \perp \pi\,.$$So we take $\phi'=w[1,m]\perp \left<1,u,v\right>$ which dominates $\psi$. 

Conversely, if there exist $\phi'$ of type $(1,3)$ and $\pi\in GP_3(F)$ such that $\phi\sim \phi' \perp \pi$, $\psi$ is weakly dominated by $\phi'$ and $\pi$, then $\phi_{F(\psi)} \sim \phi'_{F(\psi)}$, and thus $\phi_{F(\psi)}$ is isotropic.

\noindent{\bf Case 2.} Suppose that $\psi$ is of type $(0,4)$ and $\nd_F(\psi)=8$. We will apply the previous case (i.e., the case of type $(0,3)$) several times. Let  $\psi'$ be a subform of $\psi$ of dimension $3$. Since $\phi_{F(\psi)}$ and $\psi_{F(\psi')}$ are isotropic, we get that $\phi_{F(\psi')}$ is isotropic  by \cite[Lemme 4.5]{l1}. By the claim in the previous case, we may suppose that, up to a scalar, $\phi\cong \alpha[1,x] \perp \left<1,u,k\right>$ for suitable $u,k \in F^*$, and $\psi'=\left<1, u, b\right>$. So we write $\psi=\left<1,u,b,c\right>$. We put $\delta=\left<1,u,k\right>$. Now we repeat the same argument for the form $\psi''=\left<1,b,c\right>$. We conclude as in $(\star)$ that
\[\left<1,u\right>_{F(\delta)}\cong \begin{cases}\left<1,b\right>_{F(\delta)}\; \text{or}\\\left<1,c\right>_{F(\delta)}\; \text{or}\\
\left<1,bc\right>_{F(\delta)}.\end{cases}\]

(a) The first two possibilities give that $\left< 1,u,b\right>$ or $\left<1, u,c\right>$ is isotropic over $F(\delta)$, and thus by \cite[Thm.~1.2]{lr} this implies that $\delta$ is similar to $\left< 1,u,b\right>$ or $\left<1, u,c\right>$.

(b) The third possibility gives that $\left<1,u,bc\right>$ is isotropic over $F(\delta)$. The form $\left<1,u,bc\right>$ is anisotropic, otherwise we would get that $\nd_F(\psi)=4$. Again by \cite[Thm.~1.2]{lr} we conclude that $\delta$ is similar to $\left< 1,u,bc\right>$. Hence, up to a scalar, we may suppose that $\phi\cong \alpha[1,x] \perp \left<1,u,bc\right>$ and $\delta= \left<1,u,bc\right>$. Now we consider the form $\eta=\left<1, ut^2+b,c\right>$.  We know that $F(t)(\eta)$ is isometric to $F(\psi)$. Hence $\phi_{F(t)(\eta)}$ is isotropic. Again we reproduce the same argument as in $(\star)$ in {Case 1} to conclude that \[\left<1,u\right>_{F(t)(\delta)}\cong \begin{cases}\left<1,ut^2+b\right>_{F(t)(\delta)}\; \text{or}\\\left<1,c\right>_{F(t)(\delta)}\; \text{or}\\
\left<1,uct^2+bc\right>_{F(t)(\delta)}.\end{cases}\]

(b.1) In the first possibility, we conclude that $\left<1, u, ut^2+b\right>_{F(t)(\delta)}\cong \left<1,u, b\right>_{F(t)(\delta)}$ is isotropic. It follows from \cite[Thm.~1.2]{lr} that $\delta$ is similar to $\left< 1, u,b\right>$.

(b.2) In the second possibility, we conclude as in case (b.1) that $\delta$ is similar to $\left< 1, u,c\right>$.

(b.3) In the third possibility, we conclude that $\left<1,u\right>_{F(t)(\delta)}$ represents $uct^2+bc$. Since $\left<1,u\right>_{F(t)(\delta)}$ represents $bc$ (since  $\delta$ is isotropic over its own function field), it follows that $\left<1,u\right>_{F(t)(\delta)}$ represents $uct^2$, and in particular it represents $uc$. Hence $\left<1, u, uc\right>_{F(\delta)}$ is isotropic. Consequently $\left<1, u, c\right>_{F(\delta)}$ is isotropic because $\left<1, u, uc\right>$ and $\left<1, u, c\right>$ are quasi-Pfister neighbours of the same quasi-Pfister form $\left<\left<u,c\right>\right>$. Hence we get by \cite[Thm.~1.2]{lr} that $\delta$ is similar to $\left<1,u,c\right>$. 

By  cases (a) and (b), we may suppose, up to a scalar, that $\phi \cong \alpha[1,x] \perp \left<1,u,v\right>$ and $\psi=\left<1,u,v,w\right>$ for some $w\in F$. 

The isotropy of $\phi_{F(\psi)}$ implies that $[x,\alpha)\otimes F(\sqrt{u}, \sqrt{v}, \sqrt{w})$ is split. Now we follow the same argument as in Case 1 to conclude the existence of a form $\phi'$ of type $(1,3)$, a form $\pi \in GP_3(F)$ such that $\phi \sim \phi' \perp \pi$ and $\psi$ is weakly dominated by $\phi'$ and $\pi$. Conversely, these condition give the isotropy of $\phi_{F(\psi)}$ as proved in Case 1.

\noindent{\bf Case 3.} Suppose that $\psi$ is of type $(0,4)$ and $\nd_F(\psi)=4$. Let $\psi'$ be a subform of $\psi$ of dimension $3$. Since $\psi$ and $\psi'$ are quasi-Pfister neighbour of the same quasi-Pfister form, it follows that $\phi_{F(\psi)}$ is isotropic if and only if $\phi_{F(\psi')}$ is isotropic. Hence we have reduced this case to Case 1.

\end{document}